\renewcommand{\pod}[1]{\allowbreak\mathchoice
  {\if@display \mkern 18mu\else \mkern 8mu\fi (#1)}
  {\if@display \mkern 18mu\else \mkern 8mu\fi (#1)}
  {\mkern4mu(#1)}
  {\mkern4mu(#1)}
}
\renewcommand{\eqref}[1]{(\ref{#1})}   
\theoremstyle{plain}
\newcounter{thm}
\newtheorem{theorem}[thm]{Theorem}
\newtheorem{corollary}[thm]{Corollary}
\newtheorem{lemma}[thm]{Lemma}
\newtheorem{proposition}[thm]{Proposition}
\newtheorem{algorithm}{Algorithm}
\newcommand {\Q}{{\mathbb{Q}}}
\newcommand {\Z}{{\mathbb{Z}}}
\newcommand{\Gal}      {\mathop{\rm {Gal}}}
\newcommand{\Cl}      {\mathop{\rm {Cl}}}
\newcommand{\rk}        {{\mathop{\rm rk}}}
\begin{document}

\title{On Dirichlet biquadratic fields}
\date{\today}
\author{\'Etienne Fouvry}
\address{Universit\' e Paris--Saclay, CNRS, Laboratoire de math\' ematiques d'Orsay, 91405 Orsay, France}
\email{Etienne.Fouvry@universite-paris-saclay.fr}
\author{Peter Koymans}
\address{Max Planck Institute for Mathematics, Vivatsgasse 7, 53111 Bonn, Germany}
\email{koymans@mpim-bonn.mpg.de}
 

\begin{abstract}   
We prove the existence of a subset, with positive density, of odd squarefree numbers $n > 0$ such that the $4$--rank of 
the ideal class group of $\Q(\sqrt{-n}, \sqrt{n})$ is $\omega_3(n) - 1$, where $\omega_3(n)$ is the number of prime divisors of $n$ that are $3$ modulo $4$. Recall that for the class groups associated to $\Q(\sqrt{n})$ or $\Q(\sqrt{-n})$ an analogous subset of $n$ does not exist. 
 \end{abstract}

\maketitle

\section{Introduction}\label{intro}
The Cohen--Lenstra  heuristics  \cite{CL} predict the distribution of the $p$-parts of class groups for the family of imaginary quadratic and real quadratic fields, where $p$ is an odd prime. This heuristic has been extended to $p = 2$ by Gerth \cite{Gerth}, and has recently been proven by Smith \cite[Theorem 1.4]{Smith} in a major breakthrough in the context of imaginary quadratic fields. The last paragraph of the introduction of \cite{Ko--Pa} now explains how to develop  Smith's method  in the case of  real quadratic fields.

The Cohen--Lenstra heuristics have been extended to more general families of fields by Cohen and Martinet \cite{CM}. However, their work only deals with the $p$-part with $p$ coprime to the degree of the number field. In this work we explore some of the interesting features when one considers the $2$-part of class groups of biquadratic fields. To the best of our knowledge, there are no heuristics in this setting and it would be an interesting task to develop such heuristics. The odd part of the class group is much better understood, since it is known to be isomorphic to the direct product of the odd parts of the class groups of its quadratic subfields (about this isomorphism, see \cite[p.246]{Lemm} for instance).

To present our results, we will use the following notations
\vskip .3cm
\noindent $\bullet$ if $K$ is a number field, $\mathcal O_K$ is the ring of integers of $K$, $\Cl (K)$ is the ordinary class group of ideals
of $\mathcal O_K$ and $\Cl^+ (K)$ is the  corresponding narrow class group,
\vskip .3cm 
\noindent $\bullet$ 
throughout this paper, $n$ is an odd, squarefree integer  greater than $1$ and $K_n$ is the biquadratic field defined by $K_n := \Q(\sqrt{n},
\sqrt{-n})$. The fields  $K_n$ were first studied by Dirichlet and are called {\it Dirichlet biquadratic fields},
\vskip .3cm
\noindent $\bullet$
if  $G$ is a finite abelian group and $k\geq 1$ is an integer, the $2^k$--rank of $G$ is by definition
$$
\rk_{2^k}  (G):= \dim_{\mathbb F_2} (2^{k-1}G )\Big\slash (2^k G),
$$
 
\vskip .3cm
\noindent $\bullet$
the letters $p$, $p_1$,..., $p'$ are reserved to prime numbers. The total number of prime divisors of $n \geq 1$ is denoted by $\Omega (n)$.  The number of  distinct prime divisors of $n$  is denoted by $\omega (n)$ and the number of  distinct prime divisors congruent to $1$  and $3$ modulo $4$ are  denoted by $\omega_1 (n) $ and $\omega_3 (n)$ respectively,
\vskip .3cm
\noindent $\bullet$ 
we denote by $\mathcal E$ the following {\it exceptional set} 
\[
\mathcal E := \{n > 3 : n\text{ odd, squarefree such that there are } c, e \in \Z \text{ with } c^2 - ne^2 = \pm 2\}.
\]
Considerations on the prime divisors of $n$ and classical sieve techniques imply that $\mathcal E$ has $O (x(\log x)^{-1/2})$ elements     $n\leq x$.
We can now state 
\begin{theorem}
\label{central} 
Let $n > 3$ be an odd squarefree  integer satisfying the equalities
\begin{equation}
\label{rk4=rk4=0}
\rk_4 (\Cl ( \mathbb Q (\sqrt n)))=  \rk_4 (\Cl ( \mathbb Q (\sqrt {-n})))=0.
\end{equation}
We then have the equality
$$
\rk_4 (\Cl (K_n)) = \omega_3 (n) + \delta(n) + \epsilon(n) - 1,
$$
where $\epsilon(n) = 1$ if $n \in \mathcal{E}$ and $0$ otherwise, and $\delta(n)$ is defined as follows
$$
\delta(n) =
\begin{cases}
1 & \textup{ if } \omega_3 (n) =0 \textup{ and } \exists\, p \mid n,\, p \equiv 5 \bmod 8,\\
0 & \textup{ if } \omega_3 (n) =0 \textup{ and }   p \mid  n\Rightarrow  p \not\equiv 5 \bmod 8,\\
0 &  \textup{ if } \omega_3 (n) \geq 1 \textup{ and } \exists\, p \mid n,\, p \equiv 5 \bmod 8,\\
-1 & \textup{ if } \omega_3 (n) \geq 1 \textup{ and }   p \mid n \Rightarrow  p \not\equiv 5 \bmod 8.\\
\end{cases}
$$
\end{theorem}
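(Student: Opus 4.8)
The plan is to translate $\rk_4(\Cl(K_n))$ into the corank of an explicit matrix over $\F_2$ whose entries are quadratic (and, implicitly, quartic) residue symbols among the primes dividing $n$ and the prime $2$, and then to use the hypotheses \eqref{rk4=rk4=0} to compute that corank. The natural vehicle is the relative quadratic extension $K_n/F$ with $F := \Q(i)$, which is convenient because $\Cl(F)$ is trivial, $\OO_F = \Z[i]$ is a principal ideal domain with unit group $\langle i\rangle$, and $K_n = F(\sqrt n)$. The key arithmetic feature is that the primes $p \mid n$ behave differently in $F$ according to $p \bmod 4$: a prime $p \equiv 1 \bmod 4$ splits in $F$ and yields two primes that both ramify in $K_n/F$, whereas a prime $p \equiv 3 \bmod 4$ is inert in $F$ and its unique prime ramifies in $K_n/F$. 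This dichotomy is what isolates $\omega_3(n)$ as the main term: the split primes contribute only to the genus-theoretic (first-order) part of the $2$-class group, which will be absorbed by the subfield class groups, while the inert primes survive to the $4$-rank.

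First I would compute $\rk_2$ and set up the ambiguous-class machinery. Applying Chevalley's ambiguous class number formula to the cyclic extension $K_n/F$ expresses $|\Cl(K_n)^{\Gal(K_n/F)}|$ in terms of the ramification indices $\prod_v e_v$ and a local norm index of units at $F$. Since $h_F = 1$ this gives a clean count: the finite ramified places number $2\omega_1(n) + \omega_3(n)$, plus a contribution from the prime $(1+i)$ above $2$, whose ramification in $K_n/F$ is governed by $n \bmod 8$. Together with the analogous (classical) genus computations for the two quadratic subfields $\Q(\sqrt{\pm n})$, this pins down $\rk_2(\Cl(K_n))$ and, more importantly, the $\Gal(K_n/\Q)$-module structure of $\Cl(K_n)[2]$.

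Next I would pass from the $2$-rank to the $4$-rank via a Rédei-type reciprocity argument. Writing $C := \Cl(K_n)$, one has $\rk_4(C) = \rk_2(C) - \mathrm{rank}_{\F_2}(M_n)$, where $M_n$ is a Rédei matrix whose entries record whether the unramified quadratic extensions of $K_n$ produced by genus theory lift to unramified cyclic quartic extensions; concretely these entries are products of Legendre symbols $\jac{p_i}{p_j}$ and symbols at $2$. Here the hypotheses \eqref{rk4=rk4=0} enter decisively: by Rédei's theorem they assert that the Rédei matrices of $\Q(\sqrt n)$ and of $\Q(\sqrt{-n})$ have maximal rank, and these two matrices occur as sub-blocks (or quotients) of $M_n$. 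Their maximal rank forces the split-prime rows and the subfield contributions to be non-degenerate, so that after the induced cancellation the corank of $M_n$ is carried entirely by the $\omega_3(n)$ inert primes, subject to one global relation coming from the product formula; this relation produces the $-1$.

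The remaining, and I expect hardest, step is the precise local analysis at the prime $2$, which produces the correction terms $\delta(n)$ and $\epsilon(n)$. The set $\mathcal{E}$ is exactly the locus where $\pm 2$ is a value of the norm form $c^2 - n e^2 = N_{\Q(\sqrt n)/\Q}(c + e\sqrt n)$, i.e.\ where $\pm 2 \in N_{\Q(\sqrt n)/\Q}(\Q(\sqrt n)^\times)$; this solvability changes the local norm conditions at $(1+i)$ that enter both Chevalley's formula and the Rédei matrix, and hence shifts the $4$-rank by $\epsilon(n)$. The finer term $\delta(n)$ comes from the quartic behaviour of $2$: whether a prime $p \equiv 1 \bmod 4$ dividing $n$ satisfies $p \equiv 1$ or $5 \bmod 8$ is exactly the value of $\jac{2}{p}$, i.e.\ whether $2$ is a quadratic residue modulo $p$, which toggles whether the class of $(1+i)$ is a square in $C$. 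Carefully tracking these symbols against the parity $\omega_3(n) = 0$ versus $\omega_3(n) \ge 1$ yields the four cases defining $\delta(n)$. The main obstacle is thus twofold: to show rigorously that the subfield hypotheses force the corank of $M_n$ down to the inert-prime block, and to carry out the $2$-adic bookkeeping of units, norms and quartic symbols with enough precision to separate the four $\delta$-cases and the $\epsilon$-correction; the non-negativity of the right-hand side in every case is then a consistency check guaranteed by \eqref{rk4=rk4=0}.
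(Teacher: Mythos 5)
Your plan takes a genuinely different route from the paper (which never forms a R\'edei matrix for $K_n$), but it has gaps at exactly the decisive points, which you partly acknowledge. The reduction $\rk_4(\Cl(K_n)) = \rk_2(\Cl(K_n)) - \mathrm{rank}_{\F_2}(M_n)$ over the base $\Q(i)$ is plausible in principle, but your central claim --- that the R\'edei matrices of $\Q(\sqrt{n})$ and $\Q(\sqrt{-n})$ occur as sub-blocks or quotients of $M_n$, so that their maximal rank (which is what \eqref{rk4=rk4=0} gives) pushes the corank of $M_n$ onto the inert-prime block --- is asserted without any mechanism, and it cannot hold literally. The entries of a R\'edei-type matrix for $K_n/\Q(i)$ are quadratic residue symbols between Gaussian primes, and for two split primes $p, q \equiv 1 \bmod 4$ the symbol $\bigl(\tfrac{\pi'}{\pi}\bigr)$ in $\Z[i]$ carries genuinely quartic information of the type $\jac{p}{q}_4\jac{q}{p}_4$ (Burde-type rational quartic reciprocity), which is invisible to the Legendre-symbol matrices of the two quadratic subfields. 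So \eqref{rk4=rk4=0} does not determine the split-prime block of $M_n$, and showing that this quartic data is irrelevant to the corank is essentially the theorem itself. The paper supplies the missing input from a completely different direction: Dirichlet's formula $\tfrac{1}{2}h(n)h(-n)Q(n) = |\Cl(K_n)|$ (Proposition \ref{hh=h}) turns the problem into pure $2$-rank bookkeeping, \emph{provided} one knows $\rk_8(\Cl(K_n)) = 0$; that vanishing is imported from Chan--Milovic (Proposition \ref{8rank}), and it is there, not in any matrix non-degeneracy, that the hypothesis \eqref{rk4=rk4=0} does its deep work. (Your direct $4$-rank computation would avoid needing the $8$-rank, but then the entire burden falls on the unproven corank claim.)

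A second, concrete error: your identification ``$n \in \mathcal{E}$ iff $\pm 2 \in N_{\Q(\sqrt{n})/\Q}(\Q(\sqrt{n})^\times)$'' is false, because $\mathcal{E}$ demands \emph{integral} $c, e$, and that is a global, not a local, condition. For $n = 33$ one has $-2 = N_{\Q(\sqrt{33})/\Q}\bigl((5+\sqrt{33})/2\bigr)$, so $-2$ is a norm from the field, yet $c^2 - 33e^2 \equiv c^2 - e^2 \not\equiv \pm 2 \bmod 4$ for $c, e \in \Z$, so $33 \notin \mathcal{E}$. The correct characterization (Proposition \ref{Hasse}) is that $n \in \mathcal{E}$ if and only if the Hasse unit index $Q(n)$ equals $2$, i.e.\ $\epsilon i$ is a square in $K_n$ for the fundamental unit $\epsilon$ of $\Q(\sqrt{n})$ --- a statement about the fundamental unit (equivalently about principality of the ramified prime above $2$) that no local norm analysis at $(1+i)$ can detect. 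Consequently the planned ``$2$-adic bookkeeping'' cannot produce $\epsilon(n)$ as described. Likewise, $\delta(n)$ arises in the paper not from whether the class of $(1+i)$ is a square but from an elementary computation of the image of the divisors of $n$ in $(\Z[i]/4\Z[i])^\ast$ (Lemma \ref{countingdivisors}), which explains the otherwise puzzling feature, unaddressed by your sketch, that only the \emph{existence} of a prime $p \equiv 5 \bmod 8$ dividing $n$ matters and not how many there are.
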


In particular it follows that with at most $O(x/(\log x)^{1/4})$ exceptions, we have that $\rk_4 (\Cl (K_n)) = \omega_3(n) - 1$ for the set of $n < x$ satisfying equation \eqref{rk4=rk4=0}.

\begin{corollary} 
\label{<rk4<}
For every odd squarefree integer $n > 3$ satisfying \eqref{rk4=rk4=0}
we have the inequalities
$$
\max( \omega_3 (n) -2, 0) \leq \rk_4 (\Cl( K_n)) \leq  \omega_3 (n) +1.
$$
\end{corollary}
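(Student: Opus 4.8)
The plan is to deduce the corollary directly from Theorem~\ref{central}, since the hypothesis \eqref{rk4=rk4=0} is exactly the one under which that theorem applies. Writing the conclusion of the theorem as
$$
\rk_4 (\Cl (K_n)) = \omega_3 (n) - 1 + \delta(n) + \epsilon(n),
$$
everything reduces to controlling the range of the two correction terms $\delta(n)$ and $\epsilon(n)$.

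First I would record the trivial bounds. By definition $\epsilon(n) \in \{0,1\}$, so $0 \le \epsilon(n) \le 1$. Inspecting the four cases defining $\delta(n)$, its value always lies in $\{-1,0,1\}$, so $-1 \le \delta(n) \le 1$. Hence $\delta(n) + \epsilon(n)$ is an integer satisfying $-1 \le \delta(n) + \epsilon(n) \le 2$.

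For the upper bound, substituting $\delta(n) + \epsilon(n) \le 2$ into the displayed formula yields $\rk_4 (\Cl (K_n)) \le \omega_3(n) + 1$. For the lower bound, substituting $\delta(n) + \epsilon(n) \ge -1$ gives $\rk_4(\Cl(K_n)) \ge \omega_3(n) - 2$; since a $2^k$-rank is by definition the dimension of an $\F_2$-vector space, and hence a nonnegative integer, we also have $\rk_4(\Cl(K_n)) \ge 0$. Combining these two lower bounds produces $\rk_4(\Cl(K_n)) \ge \max(\omega_3(n)-2,0)$, which is the asserted inequality.

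There is essentially no obstacle here: all the substance is already contained in Theorem~\ref{central}, and the corollary merely reads off the extreme values of the bounded correction terms. The one point worth a line of care is that the bounds $-1$ and $2$ for $\delta(n)+\epsilon(n)$ need only be valid inequalities, not values attained simultaneously within a single case; consequently no case analysis coupling the defining conditions of $\delta(n)$ and $\epsilon(n)$ is needed.
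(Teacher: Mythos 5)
Your proof is correct and is exactly the derivation the paper intends: the corollary is stated as an immediate consequence of Theorem~\ref{central}, obtained by bounding $\delta(n)+\epsilon(n)$ between $-1$ and $2$ and invoking the nonnegativity of the $4$--rank for the $\max(\omega_3(n)-2,0)$ lower bound. Your remark that the extreme values need not be attained within a single case, so no coupled case analysis is required, is a correct and appropriate point of care.
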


Corollary \ref{<rk4<} gives a rather precise formula for the function $\rk_4 (\Cl (K_n))$ provided that 
$n$ satisfies \eqref{rk4=rk4=0}. Actually, the set of such $n$ is rather dense since it  represents about $28 \%$ of the set of odd squarefree 
integers (see Proposition \ref{eta(2)} below). Furthermore, a classical result of analytic number theory asserts that most of the integers $n$
are such that $\omega (n)$ is very close to its average value $\log \log n$. The same concentration phenomenom holds for the function $\omega_3 (n)$
around the value $(1/2) \log \log n$ (see Proposition \ref{concentration} below). Combining these facts with Corollary \ref{<rk4<}, we deduce the following.

\begin{corollary}
Let $\varepsilon >0$ be given. Then, for  $x>x_0 (\varepsilon)$ we have that
$$
\frac{\vert \{3 \leq n \leq x : n \textup{ odd and squarefree, } \vert \rk_4 (\Cl(K_n)) -(1/2) \log \log n\vert \leq \varepsilon \log \log n\} \vert }{\vert \{3 \leq n \leq x : n\textup{ odd and squarefree}\} \vert}
$$
is at least $0.28$.
\end{corollary}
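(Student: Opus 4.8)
The plan is to combine the pointwise two-sided bound of Corollary~\ref{<rk4<} with the two density inputs already quoted in the discussion: the positive-density statement of Proposition~\ref{eta(2)} for the set of $n$ obeying \eqref{rk4=rk4=0}, and the Tur\'an/Erd\H{o}s--Kac type concentration of $\omega_3$ supplied by Proposition~\ref{concentration}. The idea is that on the intersection of the \emph{good} set (where \eqref{rk4=rk4=0} holds) with the \emph{typical} set (where $\omega_3(n)$ is close to $(1/2)\log\log n$), Corollary~\ref{<rk4<} pins $\rk_4(\Cl(K_n))$ to within an absolute constant of $\omega_3(n)$, and for large $n$ that constant is absorbed by the slack $\varepsilon\log\log n$.

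Write $S(x) := \{3 \le n \le x : n \text{ odd and squarefree}\}$, let $A(x)$ be the subset of those $n \in S(x)$ satisfying \eqref{rk4=rk4=0}, and for the given $\varepsilon>0$ set
$$B(x) := \Big\{ n \in S(x) : \big| \omega_3(n) - \tfrac12\log\log n \big| \le \tfrac{\varepsilon}{2}\log\log n \Big\}.$$
First I would record the deterministic containment. For $n \in A(x)$, Corollary~\ref{<rk4<} gives $\max(\omega_3(n)-2,0) \le \rk_4(\Cl(K_n)) \le \omega_3(n)+1$, so that $|\rk_4(\Cl(K_n)) - \omega_3(n)| \le 2$ in every case. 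Hence for $n \in A(x) \cap B(x)$ the triangle inequality yields
$$\big| \rk_4(\Cl(K_n)) - \tfrac12\log\log n \big| \le 2 + \tfrac{\varepsilon}{2}\log\log n.$$
As soon as $\log\log n \ge 4/\varepsilon$, i.e. $n \ge n_1(\varepsilon) := \exp(\exp(4/\varepsilon))$, the constant $2$ is at most $\tfrac{\varepsilon}{2}\log\log n$, so the right-hand side is $\le \varepsilon\log\log n$. Writing $T(x)$ for the set counted in the numerator of the corollary, this proves
$$A(x) \cap B(x) \cap \{ n > n_1(\varepsilon) \} \subseteq T(x).$$

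It remains to do the density bookkeeping. Since $A(x) \subseteq S(x)$, the containment above gives
$$|T(x)| \ge |A(x)| - |S(x) \setminus B(x)| - n_1(\varepsilon).$$
Dividing by $|S(x)|$ and letting $x \to \infty$: Proposition~\ref{eta(2)} gives $|A(x)|/|S(x)| \to \eta$ with $\eta > 0.28$ (the precise constant being computed there), Proposition~\ref{concentration} gives $|S(x) \setminus B(x)|/|S(x)| \to 0$, and $n_1(\varepsilon)/|S(x)| \to 0$ because $n_1(\varepsilon)$ depends only on $\varepsilon$ whereas $|S(x)| \to \infty$. Therefore $\liminf_{x \to \infty} |T(x)|/|S(x)| \ge \eta > 0.28$, and in particular the ratio exceeds $0.28$ for all $x > x_0(\varepsilon)$, as required.

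There is no genuine arithmetic obstacle here, since all the substantial content is carried by the three quoted results; the only points requiring care are bookkeeping. One must take the concentration window in $B(x)$ to be $\tfrac{\varepsilon}{2}\log\log n$, i.e. half of the target tolerance, precisely so that the additive constant $2$ from Corollary~\ref{<rk4<} still fits inside $\varepsilon\log\log n$ once $n$ is large; and one must use the strict inequality $\eta>0.28$ from Proposition~\ref{eta(2)} to leave a positive margin over the threshold that can absorb the two $o(1)$ error terms. The cutoff $n_1(\varepsilon)$ removes only $O_\varepsilon(1)$ values of $n$ and is negligible against $|S(x)| \asymp x$.
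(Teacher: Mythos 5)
Your proof is correct and is essentially the paper's own argument: the paper gives no separate proof of this corollary, saying only that it follows by combining Corollary~\ref{<rk4<}, Proposition~\ref{eta(2)} and Proposition~\ref{concentration}, which is precisely your intersection-and-bookkeeping scheme with the additive constant $2$ absorbed into the halved window $\tfrac{\varepsilon}{2}\log\log n$. Two glossed-over points are worth a sentence each but are routine: Proposition~\ref{eta(2)} concerns the set $\mathcal N$ defined via the \emph{narrow} class group, and since $\Cl(n)$ is a quotient of ${\Cl}^+(n)$ one has $\mathcal N \subseteq A(x)$, so you only get (and only need) $\liminf_{x\to\infty} \vert A(x)\vert/\vert S(x)\vert \geq \eta_\infty(2) > 0.28$ rather than convergence of that ratio; and Proposition~\ref{concentration} is stated with $\log\log x$ rather than $\log\log n$, which is repaired by discarding the $o(x)$ integers $n \leq \sqrt{x}$, on whose complement $\log\log n = \log\log x + O(1)$.
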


This corollary shows that a positive proportion of odd squarefree $n$ are such that the associated Dirichlet biquadratic field has a large $4$--rank. Since the function $\log \log n$ tends to infinity with $n$, we see that the situation is completly different for the case of the quadratic fields $\mathbb Q (\sqrt n)$ and $\mathbb Q (\sqrt {-n})$, where the $4$--rank has an average  tendency to be much smaller. For a precise statement see  Proposition \ref{nodensity} below. 

\section{Notations,  lemmas and propositions}\label{sec2}
\subsection{Notations} We complete the notations given in the introduction by the following
\vskip .3cm
\noindent $\bullet$
to shorten notations, we write $\Cl(n)$ and $\Cl^+ (n)$ for   $\Cl(\Q(\sqrt{n}))$ and  $\Cl^+(\Q(\sqrt{n}))$ respectively. Their cardinalities
are denoted by $h(n)$ and $h^+ (n)$, 
\vskip .3cm
\noindent $\bullet$ in the ring  of integers $\mathcal O_K$, the subset of invertible elements is denoted by $\mathcal O_K^*$,
\vskip .3cm
\noindent $\bullet$
we also introduce the following  set
$$
\mathcal D := \{ D :  D\text{ fundamental discriminant}\},
$$
and its two subsets $\mathcal D^+$ and $\mathcal D^-$ containing the fundamental $D>0$ and $D<0$ respectively, 
and finally
\[
\mathcal N:= \{n \geq 3 \text{ odd, squarefree} :\rk_4({\Cl}^+(n)) = \rk_4 (\Cl(-n)) = 0\},
\]
\vskip .3cm
\noindent$\bullet$ for every integer $r\geq 0$, the function $\eta_r : \mathbb R^+ \rightarrow \mathbb R^+$ is defined by the formula
$$
\eta_r (t) := \prod_{j=1}^r (1-t^{-j}).
$$
\subsection{The  $2$--rank of class groups in the case of quadratic fields} 
The first lemma is famous and was proved by Gauss in the context of binary quadratic forms.

\begin{lemma} 
\label{Gauss1}
Let $D$ be an element of $\mathcal D$.  Then we have
$$
\rk_2 ({\Cl}^+ ( D)) = \omega (\vert D\vert) -1.
$$
\end{lemma}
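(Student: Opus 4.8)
The plan is to realize $\rk_2(\Cl^+(D))$ via genus theory. By the definition of the $2$--rank (the case $k=1$) and the finiteness of the class group, we have
$$
\rk_2(\Cl^+(D)) = \dim_{\F_2} \Cl^+(D)\big/ 2\,\Cl^+(D) = \dim_{\F_2} \Cl^+(D)[2].
$$
First I would write $D$ as a product $D = D_1 \cdots D_t$ of prime discriminants, where each odd $p \mid D$ contributes the factor $p^* = (-1)^{(p-1)/2} p$ and, when $D$ is even, the prime $2$ contributes exactly one of $-4, \pm 8$. This factorization is unique, and its number of factors equals the number of ramified primes in $\Q(\sqrt D)$, namely $t = \omega(\vert D \vert)$. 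Thus the desired identity becomes $\dim_{\F_2}\Cl^+(D)[2] = t - 1$.

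The key tool is the genus homomorphism
$$
\Psi : \Cl^+(D) \longrightarrow \{\pm 1\}^t, \qquad \Psi([\mathfrak{a}]) = \Big( \jac{D_i}{\No \mathfrak{a}}\Big)_{1 \leq i \leq t},
$$
defined on classes of ideals $\mathfrak{a}$ coprime to $D$ through Kronecker symbols; the narrowness of $\Cl^+(D)$ is exactly what makes each coordinate a well-defined character, since it is trivial on ideals generated by a totally positive element. I would then carry out two computations. Since $\No \mathfrak{a}$ is represented by a form of discriminant $D$, one has $D \equiv \square \pmod{\No \mathfrak{a}}$, hence $\prod_{i=1}^t \jac{D_i}{\No \mathfrak{a}} = \jac{D}{\No \mathfrak{a}} = 1$; combined with quadratic reciprocity and Dirichlet's theorem on primes in arithmetic progressions, this shows that the image of $\Psi$ is precisely the index--$2$ subgroup $\{(\eps_i) : \prod_i \eps_i = 1\}$, of order $2^{t-1}$. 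Second, and this is the heart of the matter, I would prove that the kernel of $\Psi$ is exactly $2\,\Cl^+(D)$; this is Gauss's \emph{principal genus theorem}, asserting that a narrow ideal class lies in the principal genus if and only if it is a square.

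Combining the two computations gives $\Cl^+(D)/2\,\Cl^+(D) \cong \mathrm{Image}(\Psi) \cong (\Z/2\Z)^{t-1}$, whence $\rk_2(\Cl^+(D)) = t - 1 = \omega(\vert D\vert) - 1$. The main obstacle is the principal genus theorem: the inclusion $2\,\Cl^+(D) \subseteq \ker \Psi$ is immediate, but the reverse inclusion is delicate and is what makes the result genuinely due to Gauss. An alternative route that bypasses it is to compute $\vert \Cl^+(D)[2]\vert$ directly through Chevalley's ambiguous class number formula: the nontrivial automorphism of $\Q(\sqrt D)$ acts as inversion on $\Cl^+(D)$, so the $2$--torsion coincides with the group of ambiguous classes, whose order the formula evaluates to $2^{t-1}$ once the ramified finite and infinite places and the unit contribution are accounted for.
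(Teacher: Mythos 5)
The paper offers no proof of this lemma at all: it is quoted as a famous classical result of Gauss (in the language of binary quadratic forms), so there is no argument of the authors to compare yours against. Your proposal is the standard genus-theory proof and its outline is correct: the reduction $\rk_2(\Cl^+(D)) = \dim_{\F_2}\Cl^+(D)/2\Cl^+(D)$, the unique factorization $D = D_1\cdots D_t$ into prime discriminants with $t = \omega(|D|)$, the genus map $\Psi$ whose coordinates are the characters $\jac{D_i}{\cdot}$ evaluated at norms, the identification of the image with the kernel of the product map (the inclusion via $\jac{D}{\No\mathfrak{a}} = 1$ for ideals coprime to $D$, surjectivity via Dirichlet's theorem applied to split primes), and the identification of the kernel with $2\Cl^+(D)$. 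Two honest caveats, which you partly flag yourself: the reverse inclusion $\ker\Psi \subseteq 2\Cl^+(D)$ is the principal genus theorem, which you invoke rather than prove --- legitimate, since that \emph{is} Gauss's theorem and the paper treats the whole statement as known, but it means the heart of the matter is quoted, not established; and the well-definedness of $\Psi$ on narrow classes (that $\jac{D_i}{N(\alpha)} = 1$ for totally positive $\alpha$ coprime to $D$, including the fussier even factors $D_i \in \{-4, \pm 8\}$ where one works modulo $8$) deserves the short computation you only gesture at --- positivity of $N(\alpha)$ is exactly where the narrow, rather than ordinary, class group enters, consistent with Lemma \ref{Gauss2}. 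Your alternative route through Chevalley's ambiguous class number formula (using that the nontrivial automorphism inverts $\Cl^+(D)$, so $\Cl^+(D)[2]$ is the ambiguous subgroup) is also sound and arguably cleaner, since it packages the principal genus theorem into a general cohomological statement; either version would serve as a complete proof once the quoted input is accepted.
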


If $D$ is negative, the two class groups ${\Cl}^+(D)$ and $\Cl (D)$ coincide. This may be not the case when $D>0$. However we have

\begin{lemma}
\label{Gauss2}
Let $D$ be an element of  $\mathcal D^+$. We have
\begin{enumerate} 
\item [i)]$\Cl (D)$ is a factor group of $\Cl^+ (D)$ with index $\mathfrak i (D) \in \{1, 2\}$, in particular we have the inequalities
$$ \rk_2({\Cl}^+ (D)  )-1\leq  \rk_2(\Cl (D)) \leq  \rk_2({\Cl}^+ (D)),$$ 
\item [ii)] $\mathfrak i (D)$ is equal to $2$ if and only if, the fundamental unit of $\mathcal O^*_{\mathbb Q(\sqrt D)} $ has norm~$1$,\\
\item [iii)] suppose that $D$ is such that  $\mathfrak i (D) =2$, then we have  the equality
$$
\rk_2 ({\Cl}(D))= \rk_2 ({\Cl}^+ (D))-1
$$
if and only if $D$ is divisible by some $p \equiv 3 \bmod 4$. 
\end{enumerate}
\end{lemma}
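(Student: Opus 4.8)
The plan is to study the natural surjection $\pi\colon\Cl^+(D)\twoheadrightarrow\Cl(D)$ coming from the inclusion of narrow into ordinary equivalence, and to identify $\ker\pi$ through signs of units. Writing $\varepsilon$ for the fundamental unit of $\mathcal O^*_{\Q(\sqrt D)}$ with $\varepsilon>1$ under the embedding $\sqrt D\mapsto+\sqrt D$, the exact sequence $\mathcal O^*_{\Q(\sqrt D)}\to\{\pm1\}^2\to\Cl^+(D)\xrightarrow{\ \pi\ }\Cl(D)\to1$ reduces everything to the sign map $u\mapsto(\mathrm{sign}\,\sigma_1(u),\mathrm{sign}\,\sigma_2(u))$, whose image is generated by $-1\mapsto(-,-)$ and $\varepsilon\mapsto(+,\mathrm{sign}\,N\varepsilon)$. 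Hence this image is all of $\{\pm1\}^2$ when $N\varepsilon=-1$ and is the index-$2$ subgroup $\{(+,+),(-,-)\}$ when $N\varepsilon=+1$. Since $\ker\pi$ is the cokernel of the sign map, $\mathfrak i(D)=|\ker\pi|\in\{1,2\}$ and $\mathfrak i(D)=2\iff N\varepsilon=+1$, which is assertion ii); surjectivity of $\pi$ gives the upper bound $\rk_2(\Cl(D))\le\rk_2(\Cl^+(D))$ in i).

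For the lower bound in i) I would use that for a finite abelian group $G$ and $C\le G$ with $|C|\le2$ one has $(G/C)\big/2(G/C)\cong G/(2G+C)$, so $\rk_2(G/C)$ differs from $\rk_2(G)=\dim_{\F_2}G/2G$ by $\dim_{\F_2}(2G+C)/2G\le1$; this proves $\rk_2(\Cl^+(D))-1\le\rk_2(\Cl(D))$. The same identity pins down iii): when $\mathfrak i(D)=2$ the kernel is $C=\langle[(\sqrt D)]\rangle\cong\Z/2$ (the class of $(\sqrt D)$ is nontrivial because $\sqrt D$ has mixed signs $(+,-)$ and, as $N\varepsilon=+1$, no unit corrects this), and the displayed isomorphism gives
\[
\rk_2(\Cl(D))=\rk_2(\Cl^+(D))-1\quad\Longleftrightarrow\quad[(\sqrt D)]\notin\Cl^+(D)^2 .
\]
Thus iii) reduces to showing that $[(\sqrt D)]$ fails to be a square in $\Cl^+(D)$ exactly when some prime $p\equiv3\bmod4$ divides $D$.

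For this I would pass to genus theory. Factoring into ramified primes gives $[(\sqrt D)]=\prod_{p\mid D}[\mathfrak p_p]$ with $\mathfrak p_p^2=(p)$, so $[(\sqrt D)]$ is an ambiguous class, and by the principal genus theorem it is a square iff every genus character vanishes on it. Take $D$ odd first: a genus character corresponds to a partition of the primes dividing $D$ into $P_1\sqcup P_2$ (with $D=D_1D_2$, $D_i=\prod_{p\in P_i}p^*$ and $p^*=(-1)^{(p-1)/2}p$), and combining the standard value $\chi_{D_1,D_2}(\mathfrak p_p)=\jac{D_2}{p}$ for $p\mid D_1$ with the symmetric reciprocity law $\jac{p^*}{q}=\jac{q}{p}$ should collapse the value to
\[
\chi_{D_1,D_2}\bigl([(\sqrt D)]\bigr)=\prod_{a\in P_1,\,b\in P_2}\jac{a}{b}\jac{b}{a}=(-1)^{\,t_1t_2},\qquad t_i:=\#\{p\in P_i:p\equiv3\bmod4\},
\]
using $\jac{a}{b}\jac{b}{a}=(-1)^{\frac{a-1}{2}\frac{b-1}{2}}$. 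Since $\omega_3(D)=t_1+t_2$ is even for a positive discriminant, a partition making both $t_1,t_2$ odd exists precisely when $\omega_3(D)\ge2$, i.e.\ exactly when some prime $\equiv3\bmod4$ divides $D$; this is exactly when some genus character is nontrivial on $[(\sqrt D)]$, which settles iii) for odd $D$. The main obstacle I anticipate is the even discriminant case, where the prime $2$ contributes one of the prime discriminants $-4,8,-8$, and both the valuation of $(\sqrt D)$ at $\mathfrak p_2$ and the Kronecker symbols $\jac{D_i}{2}$ must be tracked by hand; the same reciprocity mechanism should still show that only odd primes $\equiv3\bmod4$ can make a genus character nontrivial on $[(\sqrt D)]$. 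The examples $D=12$ (rank drops, $3\mid D$) and $D=136$ (ranks equal, odd part $17\equiv1\bmod4$) serve as cross-checks.
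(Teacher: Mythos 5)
Your proposal is correct, but note there is nothing in the paper to compare it against: Lemma \ref{Gauss2} is stated without proof, items i) and ii) being treated as classical, and item iii) being delegated to \cite[Lemma 1]{Fo--Kl4}. Your argument is the standard classical route behind those citations, and it is sound: the sign exact sequence $\mathcal O^*_{\Q(\sqrt D)} \to \{\pm1\}^2 \to \Cl^+(D) \to \Cl(D) \to 1$ gives i) and ii); the identity $(G/C)\big/2(G/C) \cong G/(2G+C)$ correctly reduces iii) to deciding whether $[(\sqrt D)] \in \Cl^+(D)^2$ (and $[(\sqrt D)]$ does generate the kernel when $N\varepsilon = +1$, since no generator $\pm u \sqrt D$ can then be totally positive); and the principal genus theorem together with your reciprocity computation settles the odd case, where the parity observation that $\omega_3(D)$ is even for $D \equiv 1 \bmod 4$, $D>0$, is exactly the point that makes ``some $p \equiv 3 \bmod 4$'' equivalent to ``a partition with $t_1, t_2$ both odd exists.'' The one step you left open, the even case, goes through precisely by the mechanism you anticipated. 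Writing $D = D^{(2)} \prod_p p^*$ with $D^{(2)} \in \{-4, 8, -8\}$, one has $(\sqrt D) = \mathfrak p_2^{v_2(D)} \prod_{p \mid D,\, p \text{ odd}} \mathfrak p_p$ with $v_2(D) \in \{2,3\}$; since $\mathfrak p_2^2 = (2)$ is narrowly principal, the prime $\mathfrak p_2$ contributes only when $v_2(D) = 3$, i.e.\ $D^{(2)} = \pm 8$, in which case $\chi(\mathfrak p_2) = \jac{D_1}{2}$ (for $2 \mid D_2$), and this factor cancels against the contribution $\prod_{a \in P_1} \jac{2}{a}$ of the $2$-part of $D_2$, because $\jac{D_1}{2} = \prod_{a \in P_1} \jac{2}{a}$ for odd $D_1 \equiv 1 \bmod 4$. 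The outcome is $\chi\bigl([(\sqrt D)]\bigr) = (-1)^{t_1 t_2}$ when $D^{(2)} = 8$ (and $t_1 + t_2 = \omega_3(D)$ is again even, so the odd-case analysis repeats verbatim), while $\chi\bigl([(\sqrt D)]\bigr) = (-1)^{t_1(1+t_2)}$ when $D^{(2)} \in \{-4, -8\}$, where positivity of $D$ forces $t_1 + t_2$ odd, so choosing $t_1$ odd always produces a nontrivial character --- consistently, such $D$ always possess a prime divisor $\equiv 3 \bmod 4$, so the rank must always drop there and does. In short: your proposal, once that even-discriminant computation is written out, is a complete self-contained proof, which is strictly more than the paper supplies for this lemma.
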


For comments on this last item, see \cite[Lemma 1]{Fo--Kl4}. From Lemmas \ref{Gauss1} and \ref{Gauss2} we deduce the following statement where we restrict to odd $n$.
 
\begin{lemma}
\label{Gauss3} Let $n\geq 3$ be an odd squarefree integer. Then we have
\begin{equation}
\label{rk2=}
\rk_2 (\Cl (n)) = \begin{cases}
\omega (n)-2 & \text{ if } n \equiv 1 \bmod 4\, \text{ and }\, \omega_3 (n)\geq 1,\\
\omega (n)-1 & \text{ if } n \equiv 1 \bmod 4\, \text{ and } \omega_3 (n)=0,\\
\omega (n)-1 & \text{ if } n\equiv 3 \bmod 4,
\end{cases}
\end{equation}
and also 
\begin{equation}
	\label{rk2==}
\rk_2 (\Cl (-n))=
\begin{cases}
\omega (n) -1 &\text{ if } n\equiv 3 \bmod 4,\\
\omega (n) & \text{ if } n\equiv 1 \bmod 4.
\end{cases}
\end{equation}
\end{lemma}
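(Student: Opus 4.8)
The plan is to reduce both formulas to Lemmas~\ref{Gauss1} and~\ref{Gauss2} by first recording the fundamental discriminant $D$ attached to each field. Since $n$ is odd, the fundamental discriminant of $\Q(\sqrt n)$ is $n$ if $n\equiv 1\bmod 4$ and $4n$ if $n\equiv 3\bmod 4$; similarly the fundamental discriminant of $\Q(\sqrt{-n})$ is $-n$ if $n\equiv 3\bmod 4$ and $-4n$ if $n\equiv 1\bmod 4$. Hence $\omega(\vert D\vert)=\omega(n)$ in the two cases with $4\nmid D$, while $\omega(\vert D\vert)=\omega(n)+1$ in the two cases with $4\mid D$, the additional prime being $2$. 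Plugging these values into Lemma~\ref{Gauss1} gives $\rk_2(\Cl^+(D))$ in every case.

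For the imaginary fields, formula~\eqref{rk2==} is then immediate: when $D<0$ the groups $\Cl^+(D)$ and $\Cl(D)$ coincide, so $\rk_2(\Cl(-n))=\omega(\vert D\vert)-1$, which equals $\omega(n)-1$ for $n\equiv 3\bmod 4$ and $\omega(n)$ for $n\equiv 1\bmod 4$. The real case~\eqref{rk2=} instead requires the passage from $\Cl^+(D)$ to $\Cl(D)$ furnished by Lemma~\ref{Gauss2}, and this is where the only real content lies. The key elementary remark is that if some prime $p\equiv 3\bmod 4$ divides $n$, then the fundamental unit of $\Q(\sqrt n)$ has norm $+1$, since a unit of norm $-1$ would exhibit $-1$ as a quadratic residue modulo $p$. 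By part~ii) of Lemma~\ref{Gauss2} this yields $\mathfrak i(D)=2$ whenever $\omega_3(n)\geq 1$, and since $D$ is then divisible by such a $p$, part~iii) gives $\rk_2(\Cl(n))=\rk_2(\Cl^+(D))-1$.

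This settles two of the three lines of~\eqref{rk2=}. If $n\equiv 1\bmod 4$ and $\omega_3(n)\geq 1$ we have $D=n$ and $\rk_2(\Cl^+(D))=\omega(n)-1$, whence $\rk_2(\Cl(n))=\omega(n)-2$; if $n\equiv 3\bmod 4$ then necessarily $\omega_3(n)\geq 1$ (an odd number of prime factors $\equiv 3\bmod 4$ is forced), we have $D=4n$ and $\rk_2(\Cl^+(D))=\omega(n)$, whence $\rk_2(\Cl(n))=\omega(n)-1$. There remains the middle line, $n\equiv 1\bmod 4$ with $\omega_3(n)=0$: here $D=n$ has no prime factor $\equiv 3\bmod 4$, so whatever the value of $\mathfrak i(D)$ Lemma~\ref{Gauss2} forces $\rk_2(\Cl(n))=\rk_2(\Cl^+(D))=\omega(n)-1$, because if $\mathfrak i(D)=1$ the groups agree, while if $\mathfrak i(D)=2$ part~iii) shows the rank cannot drop (its divisibility hypothesis fails) and the sandwich of part~i) then pins down equality. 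The only delicate point is thus this case analysis on the norm of the fundamental unit, equivalently on $\mathfrak i(D)$; I expect it to be the crux, but it is resolved uniformly by the observation above, since the presence of a prime $p\equiv 3\bmod 4$ dividing $n$ simultaneously controls both whether $\mathfrak i(D)=2$ and whether the hypothesis of part~iii) is met, so the two conditions never come into conflict.
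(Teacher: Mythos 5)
Your proof is correct and follows essentially the same route as the paper: the paper's own proof is a one-line appeal to the fundamental discriminant being $\pm n$ or $\pm 4n$ according to $\pm n \bmod 4$, combined implicitly with Lemmas \ref{Gauss1} and \ref{Gauss2}. You have simply made explicit the case analysis the paper leaves to the reader --- the norm of the fundamental unit being forced to $+1$ by a prime $p \equiv 3 \bmod 4$, the parity observation that $n \equiv 3 \bmod 4$ forces $\omega_3(n) \geq 1$, and the sandwich argument via parts i) and iii) of Lemma \ref{Gauss2} in the case $n \equiv 1 \bmod 4$, $\omega_3(n) = 0$ --- all of which is carried out correctly.
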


\begin{proof} 
It is based on the fact that the discriminant of the field $\mathbb Q (\sqrt {\pm n})$ is either $\pm n$ or $\pm 4n$  according to the congruence class of $\pm n\bmod 4$.
\end{proof}

We gather \eqref{rk2=} and \eqref{rk2==} of Lemma \ref{Gauss3} in the following statement which will be useful in \S \ref{Theproof}.

\begin{proposition}
\label{Gauss4}
Let $n\geq 3$ be an odd squarefree integer. We then have the equality
$$
\rk_2 (\Cl (n)) + \rk_2 (\Cl (-n))=
\begin{cases} 
2 \omega (n) -1 &\text{ if } \omega_3 (n)=0,\\
2 \omega (n) -2 & \text{ if } \omega_3 (n) \geq 1.
\end{cases}
$$
\end{proposition}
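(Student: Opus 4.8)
The plan is to read off the result directly from Lemma \ref{Gauss3} by adding the two rank formulas, once the congruence conditions appearing there have been rephrased in terms of $\omega_3(n)$. The only preliminary fact needed is the elementary observation that for odd squarefree $n = \prod_i p_i$ one has $p_i \equiv -1 \bmod 4$ precisely when $p_i \equiv 3 \bmod 4$, whence
$$
n \equiv (-1)^{\omega_3(n)} \pmod 4.
$$
Thus $n \equiv 1 \bmod 4$ exactly when $\omega_3(n)$ is even, and $n \equiv 3 \bmod 4$ exactly when $\omega_3(n)$ is odd. This is what lets me translate the residue conditions of Lemma \ref{Gauss3} into conditions on the quantity $\omega_3(n)$ in terms of which the proposition is phrased.

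I would then distinguish two cases following the statement. If $\omega_3(n) = 0$, then $\omega_3(n)$ is even, so $n \equiv 1 \bmod 4$; the second line of \eqref{rk2=} gives $\rk_2(\Cl(n)) = \omega(n)-1$ and \eqref{rk2==} gives $\rk_2(\Cl(-n)) = \omega(n)$, so the sum is $2\omega(n)-1$. If instead $\omega_3(n) \geq 1$, I would split on the parity of $\omega_3(n)$. When $\omega_3(n)$ is even (hence $\geq 2$) we have $n \equiv 1 \bmod 4$, so the first line of \eqref{rk2=} contributes $\omega(n)-2$ and \eqref{rk2==} contributes $\omega(n)$, for a total of $2\omega(n)-2$. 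When $\omega_3(n)$ is odd we have $n \equiv 3 \bmod 4$, so both \eqref{rk2=} and \eqref{rk2==} contribute $\omega(n)-1$, again giving $2\omega(n)-2$. This exhausts the cases and yields the stated formula.

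There is essentially no obstacle here, since each value is obtained by a single substitution into Lemma \ref{Gauss3}. The only subtlety worth flagging is that the two sub-cases of $\omega_3(n) \geq 1$ fall into different congruence classes of $n$ modulo $4$ and hence draw on different lines of \eqref{rk2=} and \eqref{rk2==}; the parity split must therefore be performed explicitly rather than fixing a single residue. Happily, both sub-cases produce the same total $2\omega(n)-2$, which is precisely why the proposition can be stated without any reference to the parity of $\omega_3(n)$.
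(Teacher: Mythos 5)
Your proof is correct and follows exactly the route the paper intends: the paper presents Proposition \ref{Gauss4} as a direct gathering of \eqref{rk2=} and \eqref{rk2==} from Lemma \ref{Gauss3}, which is precisely your case analysis, with the observation $n \equiv (-1)^{\omega_3(n)} \bmod 4$ (left implicit in the paper) spelled out to link the congruence conditions to the parity of $\omega_3(n)$. Nothing is missing; the verification that both parity sub-cases of $\omega_3(n) \geq 1$ yield the same total $2\omega(n)-2$ is exactly the point of the proposition.
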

\subsection{The  $2$--rank of class groups in the case of Dirichlet biquadratic fields}
We will prove the following 

\begin{proposition}
\label{rk2Kn=} 
Let $n \geq 3$ be a squarefree odd integer. We have the equality
$$
\rk_2 (\Cl (K_n) )=
\begin{cases} 
 {2 \omega_1 (n) +\omega_3 (n) -1} & \text{ if }p \mid n \Rightarrow p \not\equiv 5 \bmod 8,\\
 {2 \omega_1 (n) +\omega_3 (n)-2  } & \text{ if } \exists\,  p \mid n \text{ and } p\equiv 5 \bmod 8.
\end{cases}
$$ 
\end{proposition}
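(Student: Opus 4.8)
The plan is to compute $\rk_2(\Cl(K_n))$ via genus theory applied to the biquadratic field $K_n = \Q(\sqrt{n}, \sqrt{-n})$, which contains the third quadratic subfield $\Q(\sqrt{-1}) = \Q(i)$. The most natural route is to relate the $2$-rank of the class group of $K_n$ to ramification data, using that $\rk_2(\Cl(K))$ is governed by the genus field of $K/\Q$. First I would recall the general genus-theoretic formula: for a number field $K$ with Galois group $G = \Gal(K/\Q)$, the $2$-rank of $\Cl(K)$ (or its relevant quotient) is controlled by $\dim_{\F_2}$ of the Galois group of the maximal unramified extension of $K$ that is abelian over $\Q$ of exponent $2$. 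For the biquadratic $K_n$ this genus field is built from the ramified primes in the three quadratic subfields $\Q(\sqrt n)$, $\Q(\sqrt{-n})$, $\Q(i)$, and the formula should take the shape

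\begin{equation}
\label{genusshape}
\rk_2(\Cl(K_n)) = \sum_{p} t_p - (\text{unit/torsion correction}) - (r_1 + r_2 - 1),
\end{equation}

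where the sum runs over ramified rational primes and $t_p$ counts independent ramification characters at $p$, the last term accounting for the rank of the unit group and the number of infinite places.

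Next I would make the ramification data explicit. Since $n$ is odd and squarefree, the primes dividing $n$ ramify, and the behavior at $2$ and at infinity must be tracked carefully. A prime $p \equiv 1 \bmod 4$ splits in $\Q(i)$, while $p \equiv 3 \bmod 4$ is inert, so the two types contribute differently to the genus character count; this is exactly what produces the asymmetric coefficients $2\omega_1(n)$ versus $\omega_3(n)$ in the statement. The prime $2$ and the primes $p \equiv 5 \bmod 8$ are the delicate cases, since they interact with the ramification of $\Q(i)$ and with the factorization of $2$ inside the quadratic subfields; this is where the dichotomy in the two cases of the proposition will come from.

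**The cleanest execution** would be to avoid computing the genus field of $K_n$ directly and instead use the three quadratic subfields. One knows that $\rk_2(\Cl(K_n))$ can be related to the $2$-ranks of $\Cl(\Q(\sqrt n))$, $\Cl(\Q(\sqrt{-n}))$, $\Cl(\Q(i)) = 0$ together with a correction term measuring how the genus characters of the three subfields fail to be independent. Since $\Cl(\Q(i))$ is trivial, the natural guess is that $\rk_2(\Cl(K_n))$ is close to $\rk_2(\Cl(n)) + \rk_2(\Cl(-n))$ up to a bounded correction, and indeed Proposition~\ref{Gauss4} already gives their sum as $2\omega(n) - 1$ or $2\omega(n) - 2$ depending on $\omega_3(n)$. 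Writing $\omega(n) = \omega_1(n) + \omega_3(n)$, one sees that $2\omega(n)$ is very close to the target $2\omega_1(n) + \omega_3(n)$ up to the term $\omega_3(n)$ and small constants, so the bridge from Proposition~\ref{Gauss4} to the biquadratic statement is really a matter of carefully accounting for the extra factor coming from $\Q(i)$ and the unit-norm correction.

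**The main obstacle** will be the precise bookkeeping of the $2$-part and the infinite places, which is exactly what separates the two cases of the proposition. The field $K_n$ is totally imaginary (it contains $\Q(i)$), so there are no real places to worry about, but the unit rank is nonzero and the behavior of the fundamental units of the real subfield $\Q(\sqrt n)$ enters through their norms, as reflected in Lemma~\ref{Gauss2}. The condition $p \equiv 5 \bmod 8$ controls whether a certain genus character is trivial or not, equivalently whether an extra ambiguous class survives; pinning down this condition rigorously, and verifying that it produces exactly the drop of $1$ in the rank, is the technical heart of the argument and the step I expect to require the most careful case analysis.
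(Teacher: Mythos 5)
There is a genuine gap here: what you have written is a plan, not a proof, and its most concrete route is quantitatively inconsistent with the statement you are trying to prove. Your ``cleanest execution'' proposes that $\rk_2(\Cl(K_n))$ should equal $\rk_2(\Cl(n)) + \rk_2(\Cl(-n))$ ``up to a bounded correction.'' But Proposition \ref{Gauss4} gives that sum as $2\omega(n) - 1$ or $2\omega(n) - 2$, i.e.\ $2\omega_1(n) + 2\omega_3(n) + O(1)$, whereas the target is $2\omega_1(n) + \omega_3(n) + O(1)$: the discrepancy is $\omega_3(n)$, which is unbounded. You notice this yourself and dismiss it as bookkeeping, but it is the whole content of the proposition --- the genus characters of the quadratic subfields collapse in the compositum to an extent that grows with $\omega_3(n)$, and no argument in your sketch computes this collapse. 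Likewise, your first route rests on a genus-theoretic formula for the non-cyclic $V_4$ extension $K_n/\Q$ whose correction terms you leave unspecified, and you explicitly defer the $p \equiv 5 \bmod 8$ dichotomy (``the technical heart'') without indicating how the character in question would be identified or why the drop is exactly $1$. A referee cannot extract from this a verification of either case of the formula.

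The paper's proof (Lemmas \ref{rk2(Kn)=} and \ref{countingdivisors}) works over $\Q(i)$ rather than over $\Q$, and this is the idea your sketch is missing. Since $\Z[i]$ is a PID, $\Gal(K_n/\Q(i))$ acts on $\Cl(K_n)$ by inversion, so every unramified quadratic extension of $K_n$ descends to the form $K_n(\sqrt{\beta})$ with $\beta \in \Z[i]$; unramifiedness at odd places forces $\beta \mid n$ in $\Z[i]$, and a local computation at the prime above $2$ forces $\beta \equiv \pm 1 \bmod 4$. Counting such $\beta$ immediately explains the asymmetry you tried to attribute to splitting behavior: a rational prime $p \equiv 1 \bmod 4$ splits into two Gaussian primes (contributing the factor $2$ in $2\omega_1(n)$) while $p \equiv 3 \bmod 4$ stays inert (contributing $\omega_3(n)$), and the $5 \bmod 8$ condition enters as the exact criterion, via norms modulo $8$, for the image of the reduction-mod-$4$ map on Gaussian divisors of $n$ to be all of $(\Z[i]/4\Z[i])^\ast$ rather than $\{\pm 1, \pm i\}$, producing the drop of $1$. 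In short: the tractable cyclic extension here is $K_n/\Q(i)$, not $K_n/\Q$, and the triviality of $\Cl(\Q(i))$ is used structurally (to descend characters), not merely as a heuristic that the correction should be small.
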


The following lemma plays a central role.

\begin{lemma}
\label{rk2(Kn)=} 
For all odd squarefree integers $n \geq 3 $  we have the equality
\begin{equation}
\label{265}
2^{\rk_2 (\Cl (K_n))} = \frac{1}{4} |\{\beta \in \Z[i] : \beta \mid n, \beta \equiv \pm 1 \bmod 4\}|.
\end{equation}
\end{lemma}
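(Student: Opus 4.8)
The plan is to pass to the quadratic extension $K_n/\Q(i)$ and read off $\rk_2(\Cl(K_n))$ from genus theory over $k := \Q(i)$, exploiting that $\Z[i]$ is a principal ideal domain. Write $\tau$ for the nontrivial element of $\Gal(K_n/\Q(i))$, and recall that $K_n$ is totally imaginary and that $h(\Q(i)) = 1$. The first, and crucial, observation is that $\tau$ acts on $\Cl(K_n)$ as inversion: for an ideal $\idc$ of $\OO_{K_n}$ one has $\idc\,\tau(\idc) = (N_{K_n/\Q(i)}\idc)\,\OO_{K_n}$, and $N_{K_n/\Q(i)}\idc$ is an ideal of $\Q(i)$, hence principal because $h(\Q(i)) = 1$; therefore $c\cdot c^\tau = 1$ in $\Cl(K_n)$ for every class $c$. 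Consequently $\Cl(K_n)^{1-\tau} = \Cl(K_n)^2$, so that the genus group of $K_n/\Q(i)$, namely $\Cl(K_n)/\Cl(K_n)^{1-\tau}$, coincides with $\Cl(K_n)/\Cl(K_n)^2$. By class field theory this gives
$$ 2^{\rk_2(\Cl(K_n))} = \big[ K_n^{\mathrm{gen}} : K_n \big], $$
where $K_n^{\mathrm{gen}}$ is the genus field, i.e. the maximal extension of $K_n$ that is unramified over $K_n$ and abelian over $\Q(i)$.

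Next I would describe $K_n^{\mathrm{gen}}$ explicitly by Kummer theory over $\Q(i)$ (note $\mu_4 \subset \Q(i)$). Every elementary abelian $2$-extension of $K_n$ that is abelian over $\Q(i)$ is of the form $K_n(\sqrt{\beta})$ with $\beta \in \Q(i)^*$, and since $n$ is squarefree one may take $\beta$ to be a divisor of $n$ in $\Z[i]$. I would then run through the local ramification conditions to decide exactly when $K_n(\sqrt\beta)/K_n$ is unramified everywhere. The archimedean places are harmless because $K_n$ is totally complex; at a prime $\idp \mid n$ the prime ramifies in $K_n/\Q(i)$, so $v_{\idP}(\beta) \in \{0,2\}$ is even and $\sqrt\beta$ is unramified; primes away from $2n$ divide $\beta$ to order $0$. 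The only genuine constraint is at the prime $(1+i)$ above $2$, and a direct local computation should show that $K_n(\sqrt\beta)/K_n$ is unramified there precisely when $\beta \equiv \pm 1 \bmod 4$. Hence $K_n^{\mathrm{gen}}$ is generated over $K_n$ by the $\sqrt\beta$ with $\beta \mid n$ and $\beta \equiv \pm 1 \bmod 4$.

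It remains to count. Let $S := \{\beta \in \Z[i] : \beta \mid n,\ \beta \equiv \pm 1 \bmod 4\}$. The assignment $\beta \mapsto K_n(\sqrt\beta)$ is a homomorphism from the group of squarefree divisors of $n$ carrying a representative $\equiv \pm 1 \bmod 4$ (this is a subgroup, since any odd square in $\Z[i]$ is $\equiv \pm 1 \bmod 4$) onto the group of unramified quadratic extensions of $K_n$, whose order is $2^{\rk_2(\Cl(K_n))}$. I would then account for the factor $\tfrac14$ in two steps: first, each divisor class contributing to $S$ has exactly two associates ($\pm$ a fixed representative) lying in $S$, giving a factor $2$; second, because $\sqrt n \in K_n$ one has $K_n(\sqrt\beta) = K_n(\sqrt{n/\beta})$, so the complementary divisors $\beta$ and $n/\beta$ yield the same extension (and $\beta = 1$, $\beta = n$ both give the trivial one), giving the remaining factor $2$. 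Combining these yields $2^{\rk_2(\Cl(K_n))} = \tfrac14 |S|$, which is \eqref{265}.

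The main obstacle I expect is the local analysis at the prime above $2$: showing that the unramifiedness of $K_n(\sqrt\beta)/K_n$ at $(1+i)$ is equivalent to the clean global congruence $\beta \equiv \pm 1 \bmod 4$ requires a careful computation in the ramified $2$-adic field $\Q_2(i)$, and one must verify it is consistent for all prime factors of $n$ simultaneously (in particular the dichotomy between $p \equiv 1$ and $p \equiv 5 \bmod 8$, which is exactly what feeds Proposition \ref{rk2Kn=}). The remaining bookkeeping, namely that the good divisors form a group and that the kernel of $\beta \mapsto K_n(\sqrt\beta)$ is exactly $\{1,n\}$, is routine once the norm-inversion identity and the local condition are established.
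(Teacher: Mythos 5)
Your proposal is correct and follows essentially the same route as the paper: the inversion action of $\Gal(K_n/\Q(i))$ coming from $\Z[i]$ being a PID, Kummer generators $\beta \in \Z[i]$ with $\beta \mid n$ and $\beta \equiv \pm 1 \bmod 4$ via the same local ramification analysis, and the factor $\tfrac{1}{4}$ from the kernel $\{\pm 1, \pm n\}$, with your genus-field packaging only cosmetically different from the paper's direct argument about unramified quadratic extensions. The one step you assert without justification --- that every unramified quadratic extension of $K_n$ abelian over $\Q(i)$ is multiquadratic, i.e.\ that no $C_4$-quotient over $\Q(i)$ occurs, so that Kummer theory applies with $\beta \in \Q(i)^\ast$ --- is precisely where the paper splits the exact sequence $1 \rightarrow \Gal(L/K_n) \rightarrow \Gal(L/\Q(i)) \rightarrow \Gal(K_n/\Q(i)) \rightarrow 1$ by an inertia subgroup, using that $K_n/\Q(i)$ is ramified somewhere; your deferred local computation at $(1+i)$ is left at the same level of detail as in the paper's own proof.
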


\begin{proof}
Since $\Q(i)$ is a PID, we see that the action of $\Gal(K_n/\Q(i))$ on $\Cl(K_n)$ is by inversion. In particular, we see that any unramified, abelian extension $L$ of $K_n$ is Galois over $\Q(i)$. Furthermore, the exact sequence
\[
1 \rightarrow \Gal(L/K_n) \rightarrow \Gal(L/\Q(i)) \rightarrow \Gal(K_n/\Q(i)) \rightarrow 1
\]
is split. Indeed, using once more that $\Q(i)$ is a PID, we see that $K_n/\Q(i)$ is ramified at some place, so a splitting is given by inertia. Hence any quadratic unramified extension $L$ of $K_n$ has Galois group $C_2 \times C_2$ over $\Q(i)$ and is therefore of the shape $K_n(\sqrt{\beta})$ with $\beta \in \Z[i]$. 

By straightforward ramification considerations we see that $K_n(\sqrt{\beta})/K_n$ is unramified at all odd places if and only if $\beta \mid n$. Now a local computation at $\Q_2(i)$, which is the completion of $K_n$ at any prime above $2$, shows that $K_n(\sqrt{\beta})/K_n$ is unramified at any prime above $2$ if and only if $\beta \equiv \pm 1 \bmod 4$. 

For  $\alpha \in K_n^\ast$, let $\chi_\alpha$ is the continuous group homomorphism $G_{K_n} \rightarrow \{\pm 1\} $ given by $\sigma \mapsto \chi_\alpha (\sigma) =\bigl( \frac{\sigma(\sqrt{\alpha})}{\sqrt{\alpha}}\bigr)$.
We have now shown that  the group of characters $\Cl(K_n)^\vee[2]$ is generated by the characters $\chi_\beta$ with $\beta \in \Z[i]$, $\beta \mid n$ and $\beta \equiv \pm 1 \bmod 4$. But the characters $\chi_{-1}$ and $\chi_n$ are obviously trivial when restricted to $K_n$. The remaining characters are linearly independent and the result follows.
\end{proof}

It remains to count the cardinality, denoted by $F(n)$, of the set  of divisors $\beta$ appearing in the right--hand side of \eqref{265}. We have

\begin{lemma}
\label{countingdivisors} 
Let $n\geq 3$ be an odd squarefree integer. We then have the equality 
$$
F(n)=
\begin{cases} 
2^{2 \omega_1 (n) +\omega_3 (n) +1} & \text{ if }p \mid n \Rightarrow p \not\equiv 5 \bmod 8,\\
 2^{2 \omega_1 (n) +\omega_3 (n)  } & \text{ if } \exists \, p \mid n \text{ and } p\equiv 5 \bmod 8.
\end{cases}
$$
\end{lemma}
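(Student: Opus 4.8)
The plan is to count the Gaussian divisors $\beta \mid n$ with $\beta \equiv \pm 1 \bmod 4$ by first grouping them into associate classes and then detecting the congruence condition through reduction modulo $4$. Since $n$ is odd and squarefree, each rational prime $p \mid n$ splits in $\Z[i]$ when $p \equiv 1 \bmod 4$ (write $p = \pi_p \bar\pi_p$) and stays inert when $p \equiv 3 \bmod 4$. Hence every divisor of $n$ in $\Z[i]$, up to associates, is obtained by choosing for each split $p$ a subset of $\{\pi_p, \bar\pi_p\}$ and for each inert $p$ whether to include it, giving $4^{\omega_1(n)} 2^{\omega_3(n)}$ associate classes. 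Because every such $\beta$ has odd norm, its reduction modulo $4$ lies in $G := (\Z[i]/4\Z[i])^\ast$, a group of order $8$; the image $U$ of $\Z[i]^\ast = \{\pm 1, \pm i\}$ is a subgroup of order $4$, and the elements $\equiv \pm 1 \bmod 4$ form the subgroup $\{\pm 1\} \subset U$. The four associates of a fixed generator reduce to the four distinct elements of a single coset of $U$ in $G$; consequently an associate class contributes exactly $2$ to $F(n)$ if its reduction lies in $U$ and $0$ otherwise. Thus $F(n) = 2N$, where $N$ is the number of associate classes whose reduction modulo $4$ lies in $U$.

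To compute $N$, I would pass to the quotient $\psi \colon G \to G/U \cong \Z/2\Z$ and evaluate $\psi$ on the prime factors. An inert prime $p \equiv 3 \bmod 4$ satisfies $p \equiv -1 \bmod 4$, so $\psi(p) = 0$. For a split prime $p \equiv 1 \bmod 4$, writing $\pi_p = a + bi$ with $a$ odd and $b$ even, the value of $p = a^2 + b^2 \bmod 8$ detects $b \bmod 4$: one gets $\pi_p \equiv a \bmod 4 \in \{\pm 1\} \subset U$ (so $\psi(\pi_p) = 0$) exactly when $p \equiv 1 \bmod 8$, and $\pi_p \equiv a + 2i \bmod 4 \notin U$ (so $\psi(\pi_p) = 1$) exactly when $p \equiv 5 \bmod 8$, with the same value for $\bar\pi_p$. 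Since $\psi$ is a homomorphism vanishing on all inert primes and on all split primes with $p \equiv 1 \bmod 8$, the condition $\psi(\gamma) = 0$ constrains only the choices made at the primes $p \equiv 5 \bmod 8$.

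Finally I would carry out the parity count. Let $s_5$ denote the number of primes $p \mid n$ with $p \equiv 5 \bmod 8$. The choices at the primes $p \equiv 1 \bmod 8$ and at the inert primes are unconstrained, contributing a factor $2^{2(\omega_1(n) - s_5) + \omega_3(n)}$. At each prime $p \equiv 5 \bmod 8$ the four subsets of $\{\pi_p, \bar\pi_p\}$ split into two (sizes $0$ and $2$) with $\psi = 0$ and two (size $1$) with $\psi = 1$, so we must select an even number of $\psi = 1$ contributions; a standard parity count gives exactly $\tfrac12 \cdot 4^{s_5} = 2^{2 s_5 - 1}$ admissible assignments when $s_5 \geq 1$. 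Hence for $s_5 \geq 1$ we get $N = 2^{2(\omega_1(n) - s_5) + \omega_3(n)} \cdot 2^{2 s_5 - 1} = 2^{2\omega_1(n) + \omega_3(n) - 1}$, so $F(n) = 2^{2\omega_1(n) + \omega_3(n)}$; for $s_5 = 0$ the constraint is vacuous, $N = 2^{2\omega_1(n) + \omega_3(n)}$, and $F(n) = 2^{2\omega_1(n) + \omega_3(n) + 1}$, matching the two cases. The main obstacle is the middle step: one must treat the interaction between the unit ambiguity and the condition $\beta \equiv \pm 1 \bmod 4$ correctly—recognizing that it is governed by the single quotient $G/U \cong \Z/2\Z$—and pin down the reductions of the Gaussian primes modulo $4$, in particular the clean dichotomy $p \equiv 1$ versus $p \equiv 5 \bmod 8$; once this is in place the final count is routine.
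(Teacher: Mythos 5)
Your proof is correct and follows essentially the same route as the paper: both reduce Gaussian divisors of $n$ modulo $4$ into $(\Z[i]/4\Z[i])^\ast$ and rest on the same key dichotomy, namely that a Gaussian prime lands in $\{\pm 1, \pm i\}$ exactly when its norm is $1 \bmod 8$ and outside it exactly when the norm is $5 \bmod 8$. The paper merely packages the final count differently---computing the image of the divisor-reduction homomorphism $\phi_n$ and using equal fiber sizes, where you make the same count explicit via associate classes and a parity argument over the primes $p \equiv 5 \bmod 8$.
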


\begin{proof}
Let $\phi_n : \{\beta \in \Z[i] : \beta \mid n\} \rightarrow (\Z[i]/4\Z[i])^\ast$ be the map of $\mathbb{F}_2$-vector spaces given by $z \mapsto z \mod 4\Z[i]$. Then $F(n) = |\phi_n^{-1}(\{1, -1\})|$. Observe that
\[
|\{\beta \in \Z[i] : \beta \mid n\}| = 2^{2\omega_1(n) + \omega_3(n) + 2}.
\]
Now the lemma follows immediately if we can prove that
\[
\text{im}(\phi_n) = 
\begin{cases} 
\{1, -1, i, -i\} & \text{ if }p \mid n \Rightarrow p \not\equiv 5 \bmod 8,\\
(\Z[i]/4\Z[i])^\ast & \text{ if } \exists \, p \mid n \text{ and } p\equiv 5 \bmod 8.
\end{cases}
\] 
Clearly, $\{1, -1, i, -i\} \subseteq \text{im}(\phi_n)$. Furthermore, simple considerations of congruences modulo $8$ imply that 
$$
\phi_n(\beta) \not \in \{1, -1, i, -i\} \Leftrightarrow N_{\mathbb Q(i)/\mathbb Q} \, (\beta) \equiv 5 \bmod 8.
$$
This concludes the proof.
\end{proof}
\subsection{The $4$--rank of class groups in the case of quadratic fields}  
We recall a weaker form of the results of Fouvry and Kl\" uners (\cite{Fo--Kl1, Fo--Kl2}) giving strong evidence for the truth of the Cohen--Lenstra--Gerth heuristics (see \S \ref{intro}).

\begin{lemma}
\label{4rankFK}
For every integer $r\geq 0$, there exists two constants $\alpha^+_r >0$ and $\alpha^-_r >0$, satisfying the equalities
$$
\sum_{r\geq 0} \alpha_r^\pm =1,
$$
and such that,  as $X \rightarrow +\infty$, one has
$$
\vert \{ D \in \mathcal D^\pm : 0< \pm D < X, \, \rk_4 (\Cl (D)) =r \} \vert \sim \alpha_r^{\pm} \vert \{  D \in \mathcal D^\pm : 0< \pm D < X\} \vert.
$$
Similar statements hold with the following three congruence restrictions on the fundamental $D$ :
$$
D \equiv 1 \bmod 4, \, D \equiv 4 \bmod 8,  \, D \equiv 0 \bmod 8.
$$
\end{lemma}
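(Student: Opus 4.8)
The plan is to recover this from the Rédei--Reichardt reformulation of the $4$--rank together with the method of moments, which is precisely the strategy of Fouvry and Kl\"uners. First I would express the $4$--rank combinatorially. Writing a fundamental discriminant $D$ as a product $d_1\cdots d_t$ of prime discriminants (so that $t$ is the number of prime discriminants dividing $D$), the Rédei--Reichardt formula gives
$$
\rk_4(\Cl(D)) = t - 1 - \rk_{\F_2}(R_D),
$$
where $R_D = (a_{ij})$ is the $t\times t$ Rédei matrix over $\F_2$ with off--diagonal entries determined by the additive Legendre symbols $(-1)^{a_{ij}} = \jac{d_i}{p_j}$ and diagonal entries fixed by forcing the row sums to vanish. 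Since the rows of $R_D$ sum to zero, studying the law of $\rk_4(\Cl(D))$ is the same as studying the law of the corank of $R_D$ as $D$ ranges over $\mathcal{D}^\pm$.

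Next I would set up the method of moments. For each integer $k\ge 0$ one forms
$$
M_k^\pm(X) := \sum_{\substack{D \in \mathcal{D}^\pm \\ 0 < \pm D < X}} 2^{k\,\rk_4(\Cl(D))},
$$
and the goal is to prove that $M_k^\pm(X)$ is asymptotic to a constant times $|\{D \in \mathcal{D}^\pm : 0 < \pm D < X\}|$. The point of raising $2$ to the $4$--rank is that $2^{\rk_4(\Cl(D))}$ counts $\F_2$--points in the kernel of $R_D$, and each such kernel vector corresponds to a factorization $D = D_1 D_2$ subject to Jacobi symbol conditions of the type $\jac{D_1}{D_2} = 1$. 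Expanding the indicator of these conditions by orthogonality, via $\tfrac12\bigl(1 + \jac{D_1}{D_2}\bigr)$, turns $M_k^\pm(X)$ into a main (diagonal) term plus a linear combination of character sums
$$
\sum_{d_1,\ldots,d_m} \ \prod_{i<j} \jac{d_i}{d_j},
$$
ranging over $m$--tuples of coprime squarefree arguments with $m$ bounded in terms of $k$.

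The analytic heart, and the main obstacle, is to show that all the genuinely off--diagonal character sums above exhibit strong cancellation, so that only the diagonal contributes to the asymptotics. This is exactly where the \emph{double oscillation} estimates for the Jacobi symbol of Fouvry and Kl\"uners are needed: one exploits quadratic reciprocity to trade $\jac{d_i}{d_j}$ against $\jac{d_j}{d_i}$ and then applies a double large sieve to extract a power saving in $X$. Controlling these sums uniformly, and verifying that the diagonal main term reproduces exactly the $k$--th moment of the corank of the relevant random $\F_2$--matrix ensemble, is the substance of the argument; the remaining error terms, including the contribution of degenerate configurations where the arguments share common factors, are routine by comparison.

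Finally I would conclude by the method of moments. The limiting moments one obtains are those of the Cohen--Lenstra--Gerth corank distribution; since these moments grow slowly enough that the distribution is uniquely determined by them, convergence of moments upgrades to convergence in law, producing the densities $\alpha_r^\pm > 0$ with $\sum_{r\ge 0}\alpha_r^\pm = 1$ (no mass escapes, as the zeroth moment is just the count). The distinction between $\mathcal{D}^+$ and $\mathcal{D}^-$, as well as the three congruence restrictions $D \equiv 1 \bmod 4$, $D \equiv 4 \bmod 8$, $D\equiv 0 \bmod 8$, affect only the local conditions at the prime $2$: they add a prescribed row and column to the Rédei matrix, and fix the behaviour of the infinite place, which changes the matrix ensemble and hence the numerical values of the $\alpha_r^\pm$, but leaves the moment computation and the cancellation estimates structurally unchanged. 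Hence the same argument yields all the stated variants.
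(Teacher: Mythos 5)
First, a point of comparison: the paper does not prove this lemma at all --- it is stated as a weakened quotation of the theorems of Fouvry and Kl\"uners \cite{Fo--Kl1, Fo--Kl2}, so the only meaningful comparison is with those cited papers. Your sketch does reproduce their architecture faithfully: the R\'edei--Reichardt formula, the moments $\sum_{0<\pm D<X} 2^{k\,\rk_4}$, the expansion into multiple Jacobi-symbol sums over tuples $(d_1,\dots,d_m)$, the double-oscillation (quadratic large sieve) estimates for the off-diagonal terms, and the observation that the sign of $D$ and the congruence classes $1 \bmod 4$, $4 \bmod 8$, $0 \bmod 8$ only modify the local ensemble at $2$ and at infinity. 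One smaller inaccuracy: for $D>0$ the R\'edei--Reichardt formula as you state it computes $\rk_4(\Cl^+(D))$, not $\rk_4(\Cl(D))$; the lemma here concerns the ordinary class group, and passing from narrow to ordinary in the real case introduces an extra condition tied to the norm of the fundamental unit, which enlarges the system of symbol conditions. Fouvry--Kl\"uners treat this explicitly; your sketch silently conflates the two.

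The genuine gap is your final step: ``the moments grow slowly enough that the distribution is uniquely determined by them'' is false as stated. The moments $M_k=\sum_r \alpha_r 2^{kr}$ grow like $2^{k^2/2+O(k)}$, far beyond any Carleman-type determinacy criterion, and the moment problem in this setting is genuinely indeterminate even restricted to distributions on $\Z_{\geq 0}$: by Euler's identity the entire function $\prod_{j\geq 0}(1-2^{-j}z)=\sum_r \gamma_r z^r$ vanishes at every point $z=2^k$, $k \geq 0$, and its coefficients satisfy $|\gamma_r| \ll 2^{-\binom{r}{2}}$, so that perturbing a suitable base distribution $(\beta_r)$ (say $\beta_r$ proportional to $2^{-\binom{r}{2}}$) by a small multiple of $(\gamma_r)$ produces a \emph{second} probability distribution with identical values of $\sum_r \alpha_r 2^{kr}$ for every integer $k\geq 0$ (note $f(1)=0$ keeps the total mass equal to $1$). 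Hence convergence of the moments does not upgrade to convergence in law by any generic moment-method argument; one needs a bespoke deduction adapted to the specific random-matrix moment values --- an alternating inversion with Gaussian-binomial-type coefficients whose absolute convergence must be checked against the exact size of the $M_k$, as carried out by Fouvry--Kl\"uners (following the prototype in Heath-Brown's work on $2$-Selmer groups). Without that lemma, your proposal establishes the moment asymptotics but not the existence of the densities $\alpha_r^{\pm}$ with $\sum_{r\geq 0}\alpha_r^{\pm}=1$, which is precisely the content of the statement.
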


The constants $\alpha_r^\pm$ do not depend on the three congruences  above  and they can be expressed in terms of the $\eta_r$--function. From the remark that a squarefree odd $n\geq 3$ is either a fundamental discriminant (when $n \equiv 1 \bmod 4$) or one quarter of a fundamental discriminant (when $n \equiv 3 \bmod 4$) we deduce from Lemma \ref{4rankFK} the following

\begin{proposition}
\label{nodensity}
Let $\psi : \mathbb R^+ \rightarrow \mathbb R^+$ be an increasing function tending to infinity. Then as $X$ tends to infinity, we have
$$
\vert \{ n : 1\leq n\leq X, \, n \textup{ odd and squarefree, } \rk_4 (\Cl (n) )\geq \psi (n)\}\vert  = o ( X).
$$
A similar statement holds for negative $n$.
\end{proposition}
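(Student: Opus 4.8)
The plan is to convert the count over odd squarefree $n$ into a count over fundamental discriminants, and then feed this into the Fouvry--Kl\"uners densities of Lemma~\ref{4rankFK}. First I would record the dictionary supplied by the remark preceding the statement. An odd squarefree $n\ge 3$ with $n\equiv 1\bmod 4$ is itself a fundamental discriminant $D=n\equiv 1\bmod 4$, whereas $n\equiv 3\bmod 4$ yields the fundamental discriminant $D=4n$, which satisfies $D\equiv 4\bmod 8$ because $n$ is odd; in both cases $\Cl(n)=\Cl(D)$, hence $\rk_4(\Cl(n))=\rk_4(\Cl(D))$. Consequently the assignment $n\mapsto D$ injects the positive odd squarefree integers $n\le X$ into $\mathcal D^+$, landing in exactly the two congruence classes $D\equiv 1\bmod 4$ (with $D\le X$) and $D\equiv 4\bmod 8$ (with $D\le 4X$). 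These are two of the three congruence restrictions covered by Lemma~\ref{4rankFK}, and the associated constants are the same $\alpha_r^+$.

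Next I would exploit the identity $\sum_{r\ge 0}\alpha_r^+=1$. Given $\varepsilon>0$, I fix an integer $R$ with $\sum_{r>R}\alpha_r^+<\varepsilon$. Summing the asymptotics of Lemma~\ref{4rankFK} over the finitely many $r\le R$, in each of the two fixed congruence classes separately, shows that the number of $D\in\mathcal D^+$ with $0<D<T$ lying in that class and with $\rk_4(\Cl(D))>R$ equals $\bigl(\sum_{r>R}\alpha_r^++o(1)\bigr)$ times the total number of such $D$, as $T\to\infty$. Since that total is $O(T)$ with an absolute implied constant, the number of bad $D$ up to $T$ is $O(\varepsilon\, T)$.

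It remains to handle the variable threshold $\psi(n)$. Because $\psi$ is increasing and tends to infinity, there is $Y_0=Y_0(R)$ with $\psi(y)>R$ for all $y\ge Y_0$, so every $n>Y_0$ contributing to the set satisfies $\rk_4(\Cl(n))\ge\psi(n)>R$. Writing $A_1(T)$ and $A_4(T)$ for the number of fundamental discriminants in $\mathcal D^+$ up to $T$ with $\rk_4>R$ in the classes $\equiv 1\bmod 4$ and $\equiv 4\bmod 8$ respectively, the dictionary gives
\[
|\{n\le X : n \text{ odd, squarefree},\ \rk_4(\Cl(n))\ge\psi(n)\}|\le Y_0 + A_1(X) + A_4(4X).
\]
Dividing by $X$ and letting $X\to\infty$, the fixed quantity $Y_0$ contributes $0$ and the two remaining terms have $\limsup$ of size $O(\varepsilon)$ by the previous paragraph, the harmless factor $4$ in $A_4(4X)$ being absorbed into the implied constant. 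As $\varepsilon>0$ was arbitrary the $\limsup$ is $0$, which is precisely $o(X)$. The statement for negative $n$ is identical after replacing $\mathcal D^+,\alpha_r^+$ by $\mathcal D^-,\alpha_r^-$ and using that $-n$ is a fundamental discriminant $\equiv 1\bmod 4$ when $n\equiv 3\bmod 4$, while $-4n\equiv 4\bmod 8$ when $n\equiv 1\bmod 4$.

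I expect the only delicate point, rather than a genuine obstacle, to be the order of the quantifiers: one must fix $R$ (equivalently $\varepsilon$) first, then choose $Y_0$, and only afterwards send $X\to\infty$, so that the slowly growing but unbounded $\psi$ is eventually dominated by the constant $R$ on the bulk of the range $[Y_0,X]$. Everything else is a straightforward repackaging of Lemma~\ref{4rankFK}.
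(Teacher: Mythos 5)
Your proof is correct and takes essentially the same route as the paper, which deduces Proposition \ref{nodensity} from Lemma \ref{4rankFK} via exactly the dictionary you describe ($D=n\equiv 1\bmod 4$, respectively $D=4n\equiv 4\bmod 8$, with the analogous classes for negative $n$), leaving the rest implicit. Your tail-sum argument using $\sum_{r\ge 0}\alpha_r^\pm=1$ and the careful quantifier order (fix $R$, then $Y_0$, then let $X\to\infty$) merely fills in the routine details the paper omits.
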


We will appeal to another result of Fouvry and Kl\" uners concerning the values of the pair $(\rk_4 (\Cl (n)), \rk_4 (\Cl (-n)))$. 
We have

\begin{lemma} 
\cite[Theorem 1.8]{Fo--Kl3}
For any integer  $r\geq 0$ we have
\begin{multline*}
\lim_{X \rightarrow \infty}\frac {\vert\{D \in \mathcal D^+ : D \equiv 1 \bmod 4,D \leq X,  \rk_4({\Cl}^+(D)) = \rk_4 (\Cl(-D)) = r\}\vert }{\vert \{ D \in \mathcal D^+ : D \equiv 1 \bmod 4,D\leq X\} \vert} \\ = 2^{-r^2-r}(1-2^{-(r+1)})\eta_\infty (2) \eta_r(2)^{-1} \eta_{r+1}(2)^{-1}.
\end{multline*}
A similar statement holds true when the congruence $D \equiv 1 \bmod 4$ is replaced by $D\equiv 4 \bmod 8$.
\end{lemma}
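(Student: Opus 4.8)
The plan is to follow the circle of ideas behind Lemma \ref{4rankFK}, upgrading it from the marginal distribution of a single $4$--rank to the joint distribution of the pair. The starting point is Rédei's theorem, which expresses each $4$--rank as the corank of an explicit matrix over $\mathbb{F}_2$. Writing $D = \prod_{i=1}^t d_i$ as a product of prime discriminants (so $t = \omega(D)$), there is a Rédei matrix $M^+(D)$ whose off-diagonal entries are the additive Legendre symbols of the $d_i$ against the primes dividing $D$ and whose diagonal is pinned down by the requirement that every row sum vanish, such that $\rk_4(\Cl^+(D)) = (t-1) - \rk_{\mathbb{F}_2}(M^+(D))$. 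For the imaginary field $\Q(\sqrt{-D})$, whose discriminant is $-4D$ because $D \equiv 1 \bmod 4$, the relevant matrix $M^-(D)$ is obtained from $M^+(D)$ by adjoining the extra prime discriminant $-4$; one gets a bordered matrix (the diagonal of the odd block being readjusted by the row-sum rule), with $\rk_4(\Cl(-D)) = t - \rk_{\mathbb{F}_2}(M^-(D))$. The corresponding $2$--ranks $t-1$ and $t$ are exactly those recorded in Lemmas \ref{Gauss1} and \ref{Gauss3}. The decisive structural observation is that $M^+$ and $M^-$ are assembled from the \emph{same} Legendre symbols among the odd primes dividing $D$, so the two $4$--ranks are strongly correlated, the correlation being governed entirely by this bordering by the $2$--adic datum.

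Next I would pass to moments. Since $\#\ker_{\mathbb{F}_2}(M) = 2^{t - \rk M}$, the quantities $2^{\rk_4(\Cl^+(D))}$ and $2^{\rk_4(\Cl(-D))}$ are, up to fixed powers of $2$, the cardinalities of the kernels of $M^+(D)$ and $M^-(D)$; and a kernel vector corresponds to a factorization $D = D_1 D_2$ into coprime discriminants satisfying the Rédei conditions $\left(\frac{D_1}{q}\right) = 1$ for all $q \mid D_2$ and $\left(\frac{D_2}{p}\right) = 1$ for all $p \mid D_1$. Consequently the joint moments
\[
\sum_{\substack{D \in \mathcal{D}^+,\ D \equiv 1 \bmod 4,\ D \leq X}} 2^{k\,\rk_4(\Cl^+(D))}\,2^{l\,\rk_4(\Cl(-D))}
\]
count, for each $D$, the $k$-tuples and $l$-tuples of admissible factorizations; opening the Legendre conditions turns the whole expression into a family of multiple quadratic character sums in $D$ and in the factorization variables.

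The analytic core is the evaluation of these sums. Each splits into a diagonal part, where the tuples of factorizations align and the arguments of the symbols become perfect squares, and an off-diagonal part consisting of genuine character sums $\sum_D \left(\frac{A}{D}\right)$ with $A$ non-square. The key is to show that the off-diagonal contribution is negligible as $X \to \infty$; this is precisely the input that makes the result a theorem rather than a heuristic, and it rests on square-root cancellation for such sums, supplied by the large sieve for real characters together with the estimates for double oscillation of the Jacobi symbol of Heath-Brown and of Fouvry-Klüners. Only the diagonal survives in the limit.

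Finally, the diagonal main term is a purely combinatorial quantity: it is the joint $q$-factorial moment, at $q=2$, of the ranks of the \emph{correlated} pair of random $\mathbb{F}_2$-matrices $(M^+, M^-)$ modelled above. Computing the limiting joint rank distribution of this pair is a finite linear-algebra computation over $\mathbb{F}_2$ in the spirit of the Cohen--Lenstra--Gerth heuristics, and inverting the moment relations yields a closed form; matching it against the computed moments produces the stated density $2^{-r^2 - r}(1 - 2^{-(r+1)})\,\eta_\infty(2)\,\eta_r(2)^{-1}\,\eta_{r+1}(2)^{-1}$. The variant with $D \equiv 4 \bmod 8$ is identical except that the prime discriminant attached to $2$ changes, altering only the bordering row and column of $M^-$ and not the analytic estimates. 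The main obstacle is the off-diagonal cancellation: establishing square-root savings for all the relevant multiple quadratic character sums, uniformly enough to control every joint moment at once, is the genuinely hard part, whereas the combinatorial bookkeeping for the correlated pair, though intricate, is routine once the off-diagonal is known to vanish.
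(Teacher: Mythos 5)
This lemma is not proved in the paper at all: it is quoted verbatim from Fouvry--Kl\"uners \cite[Theorem 1.8]{Fo--Kl3}, so there is no internal proof to compare against, and the right benchmark is the argument in that reference. Measured against it, your outline reconstructs the genuine architecture faithfully: R\'edei's theorem expressing each $4$--rank as a corank ($\rk_4({\Cl}^+(D)) = (t-1) - \rk M^+$ with $t = \omega(D)$, and $\rk_4(\Cl(-D)) = t - \rk M^-$ since $-4D$ carries the extra prime discriminant $-4$), kernel vectors corresponding to decompositions $D = D_1 D_2$ with the R\'edei splitting conditions, joint moments of $2^{\rk_4}$ opened into multiple Jacobi-symbol sums, off-diagonal cancellation via the quadratic large sieve and the Heath-Brown/Fouvry--Kl\"uners double-oscillation estimates, and a diagonal main term evaluated by a random-matrix computation over $\mathbb{F}_2$. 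This is indeed how \cite{Fo--Kl2} and \cite{Fo--Kl3} proceed.

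Two substantive points would need attention before the sketch could stand as a proof. First, the structural relation between $M^+$ and $M^-$ is more rigid than ``a correlated pair assembled from the same symbols'': by quadratic reciprocity applied to the prime discriminants $p^* = (-1)^{(p-1)/2}p$, the odd block of $M^-$ is, up to the bordering row and column coming from $-4$ and the diagonal readjustment, the \emph{transpose} of that of $M^+$. It is exactly this transpose-plus-border structure that forces the Spiegelungssatz constraint $\rk_4({\Cl}^+(D)) \leq \rk_4(\Cl(-D)) \leq \rk_4({\Cl}^+(D)) + 1$ and produces the specific factor $2^{-r^2-r}(1-2^{-(r+1)})\eta_\infty(2)\eta_r(2)^{-1}\eta_{r+1}(2)^{-1}$; a generic correlated pair would give a different answer, so this step is not merely ``routine bookkeeping.'' Second, ``inverting the moment relations'' is delicate here: the joint moments $2^{k\rk_4^+ + l\rk_4^-}$ grow superexponentially in $(k,l)$, so standard moment-determinacy criteria fail, and one must combine the explicit inversion over distributions supported on $\{0,1,2,\dots\}$ with tail bounds of the shape $2^{-r^2}$ for the rank distribution, as Fouvry--Kl\"uners do. You correctly flag the uniform off-diagonal estimates as the analytic heart (and they are available in the literature you name), but these two omitted steps are where the cited paper does real work beyond the single-rank case of Lemma \ref{4rankFK}.
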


Apply this lemma with $r=0$. Since  $n$ or $4n$ is  a fundamental discriminant and computing numerically the value of $\eta_\infty (2)$, we deduce

\begin{proposition}
\label{eta(2)}
We have 
$$
\lim_{X \rightarrow \infty}\frac { \vert\{ 1 \leq n \leq X : n \in \mathcal N   \}\vert }{\vert \{1 \leq n\leq X : n\textup{ odd, squarefree} \} \vert} \\ =  \eta_\infty (2)
=\,.28878\, 80950\,\dots 
$$
\end{proposition}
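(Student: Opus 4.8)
The plan is to deduce Proposition~\ref{eta(2)} from the Fouvry--Kl\"uners lemma stated just above, specialized to $r=0$. The first step is purely computational: at $r=0$ one has $\eta_0(2)=1$ (empty product) and $\eta_1(2)=1-2^{-1}=\tfrac12$, so the right--hand side $2^{-r^2-r}(1-2^{-(r+1)})\,\eta_\infty(2)\,\eta_r(2)^{-1}\eta_{r+1}(2)^{-1}$ becomes $1\cdot\tfrac12\cdot\eta_\infty(2)\cdot 1\cdot 2=\eta_\infty(2)$. I would note that the same value is produced by both congruence conditions the lemma allows, namely $D\equiv 1\bmod 4$ and $D\equiv 4\bmod 8$. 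Thus the density, inside either of these two classes of real fundamental discriminants $D$, of those for which $\rk_4(\Cl^+(D))=\rk_4(\Cl(-D))=0$ equals $\eta_\infty(2)$.

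The second step is to match the membership condition defining $\mathcal N$ with the pair of conditions in the lemma, via the map $n\mapsto D$ sending an odd squarefree $n\geq 3$ to the discriminant $D$ of $\Q(\sqrt n)$. There are two cases: if $n\equiv 1\bmod 4$ then $D=n\equiv 1\bmod 4$, while if $n\equiv 3\bmod 4$ then $D=4n\equiv 4\bmod 8$. In both cases $\Q(\sqrt D)=\Q(\sqrt n)$ and $\Q(\sqrt{-D})=\Q(\sqrt{-n})$, so that $\Cl^+(D)=\Cl^+(n)$ and $\Cl(-D)=\Cl(-n)$; hence $n\in\mathcal N$ if and only if $\rk_4(\Cl^+(D))=\rk_4(\Cl(-D))=0$. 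Moreover $n\mapsto n$ is a bijection between odd squarefree $n\equiv 1\bmod 4$ and fundamental $D\equiv 1\bmod 4$, while $n\mapsto 4n$ is a bijection between odd squarefree $n\equiv 3\bmod 4$ and fundamental $D\equiv 4\bmod 8$.

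The third step assembles the densities, keeping track of the ranges. Writing $N(X)$ for the number of odd squarefree $n\leq X$, the standard count gives $N(X)\sim (4/\pi^2)X$, and by equidistribution modulo $4$ each of the two residue classes contributes $\sim(2/\pi^2)X$. For the $n\equiv 1\bmod 4$ part of $\mathcal N$ the bijection preserves the bound $n=D\leq X$, so the lemma yields $\sim\eta_\infty(2)\cdot(2/\pi^2)X$. For the $n\equiv 3\bmod 4$ part the condition $n\leq X$ becomes $D=4n\leq 4X$; since the number of fundamental $D\equiv 4\bmod 8$ with $D\leq Y$ is $\sim Y/(2\pi^2)$, the lemma gives $\sim\eta_\infty(2)\cdot 4X/(2\pi^2)=\eta_\infty(2)\cdot(2/\pi^2)X$. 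Adding the two contributions, the numerator is $\sim\eta_\infty(2)\cdot(4/\pi^2)X$, which divided by $N(X)\sim(4/\pi^2)X$ gives the limit $\eta_\infty(2)$; the numerical value $0.2887880950\dots$ then follows by evaluating the infinite product.

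The point requiring the most care is not any single estimate---each of the three inputs (the $r=0$ evaluation, the field-theoretic identifications, and the residue-class densities) is routine---but the bookkeeping that forces the factors of $4$ to cancel. One must apply the $D\equiv 4\bmod 8$ version of the lemma over the \emph{dilated} range $D\leq 4X$ and observe that the density $\sim Y/(2\pi^2)$ of such discriminants, combined with $Y=4X$, reproduces exactly the $(2/\pi^2)X$ coming from the $n\equiv 3\bmod 4$ class. In this way both residue classes contribute equally, the normalizing constant $4/\pi^2$ disappears in the ratio, and one is left with precisely $\eta_\infty(2)$.
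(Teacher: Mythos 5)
Your proposal is correct and follows exactly the paper's route: the paper likewise deduces Proposition~\ref{eta(2)} by specializing the Fouvry--Kl\"uners lemma to $r=0$ (where the constant collapses to $\eta_\infty(2)$ since $(1-2^{-1})\eta_1(2)^{-1}=1$) and using that an odd squarefree $n$ corresponds to the fundamental discriminant $n$ or $4n$ according to $n \bmod 4$. You have merely written out in full the bookkeeping (the two bijections onto the classes $D\equiv 1 \bmod 4$ and $D \equiv 4 \bmod 8$, the dilation $D \leq 4X$, and the cancellation of the density constant $4/\pi^2$) that the paper leaves implicit in its two-line proof.
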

\subsection{The $8$--rank of some Dirichlet biquadratic fields.}  
\begin{proposition}  
\label{8rank}
For all $n \in \mathcal N$ we have 
$$
\rk_8 ( \Cl (K_n) ) = 0.
$$
\end{proposition}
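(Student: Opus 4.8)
The plan is to pass to the $2$-Sylow subgroup $C := \Cl(K_n)[2^\infty]$, written additively, and to prove the sharp statement $4C = 0$; since the $8$-rank depends only on the $2$-part, this gives $\rk_8(\Cl(K_n)) = \dim_{\F_2}(4C/8C) = 0$. The starting structural input is already present in the proof of Lemma~\ref{rk2(Kn)=}: because $\Q(i)$ is a PID, the generator $\sigma$ of $\Gal(K_n/\Q(i))$ acts on $\Cl(K_n)$ by inversion, so $\sigma = -1$ on $C$. Writing $c$ for complex conjugation, which generates $\Gal(K_n/\Q(\sqrt n))$, the remaining nontrivial automorphism $\sigma c$ generates $\Gal(K_n/\Q(\sqrt{-n}))$, and $\sigma = -1$ forces $\sigma c = -c$ on $C$.

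The key is the elementary operator identity $2 = (1+c) + (1+\sigma c)$ on $C$, valid because $\sigma c = -c$. Both summands are ``extend the relative norm'' maps: for any class $[\mathfrak a]\in C$ one has
\[
2[\mathfrak a] = (1+c)[\mathfrak a] + (1+\sigma c)[\mathfrak a] = j_1\bigl([\No_{K_n/\Q(\sqrt n)}(\mathfrak a)]\bigr) + j_3\bigl([\No_{K_n/\Q(\sqrt{-n})}(\mathfrak a)]\bigr),
\]
where $j_1,j_3$ denote extension of ideals from the real, respectively imaginary, quadratic subfield up to $K_n$ (here I would verify the identity $\mathfrak a\,\mathfrak a^c = \No_{K_n/\Q(\sqrt n)}(\mathfrak a)\,\mathcal O_{K_n}$ and its analogue for $\sigma c$, which is the single place where the bookkeeping could slip). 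Consequently $(1+c)C \subseteq j_1\bigl(\Cl(\Q(\sqrt n))[2^\infty]\bigr)$ and $(1+\sigma c)C \subseteq j_3\bigl(\Cl(\Q(\sqrt{-n}))[2^\infty]\bigr)$.

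Now I would feed in the hypothesis $n \in \mathcal N$. Since $\rk_4(\Cl^+(n)) = 0$, the $2$-part of the narrow, hence also of the ordinary, class group of $\Q(\sqrt n)$ is elementary abelian; likewise $\rk_4(\Cl(-n)) = 0$ makes the $2$-part of $\Cl(\Q(\sqrt{-n}))$ elementary abelian. A homomorphic image of an elementary abelian group is elementary abelian, so both $(1+c)C$ and $(1+\sigma c)C$ have exponent dividing $2$. Hence, for every $x\in C$, doubling the displayed identity yields $4x = 2\,(1+c)x + 2\,(1+\sigma c)x = 0$. Therefore $4C = 0$, which is exactly $\rk_8(\Cl(K_n)) = 0$.

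I expect the conceptual crux — rather than any computation — to be the decision to bound the \emph{norm images} $(1\pm c)C$ instead of the full ambiguous subgroups $C^{c=\pm 1}$. The naive eigenspace inclusion $2C \subseteq C^{c=1} + C^{c=-1}$ is too weak: by the ambiguous class number formula, $C^{c=1}$ can strictly exceed $j_1(\Cl(\Q(\sqrt n)))$ because a ramified prime $\mathfrak P$ only satisfies $[\mathfrak P]^2 = j_1([\mathfrak p])$, so $[\mathfrak P]$ may have order $4$ even when the subfield class group has exponent $2$. That route would only give $8C\subseteq$ (exponent-$2$), i.e.\ $\rk_{16} = 0$. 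Routing everything through the relative norms, whose images genuinely land in the $j_i(\Cl(\cdot))$, is what upgrades the conclusion to the optimal $4C = 0$.
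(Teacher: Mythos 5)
Your proof is correct, and it takes a genuinely different route from the paper, which gives no argument at all for Proposition~\ref{8rank}: the authors simply invoke Lemma~3.2 of Chan--Milovic \cite{Chan--Milovic}, so your write-up is a self-contained replacement for an external citation. Every step checks out. The inversion action of $\Gal(K_n/\Q(i))$ on $\Cl(K_n)$ is exactly the observation the paper itself makes at the start of the proof of Lemma~\ref{rk2(Kn)=}, and your operator identity $(1+c)+(1+\sigma c) = 2 + c - c = 2$ on $C = \Cl(K_n)[2^\infty]$ is immediate from it. The norm bookkeeping you flagged as the risky spot is fine: for any quadratic extension $L/F$ with nontrivial automorphism $\tau$, the relation $\mathfrak{a}\,\tau(\mathfrak{a}) = N_{L/F}(\mathfrak{a})\mathcal{O}_L$ holds for \emph{all} fractional ideals (one verifies it on split, inert and ramified primes separately), with no coprimality hypothesis, and the relative norm carries the $2$-Sylow of $\Cl(K_n)$ into the $2$-Sylow of each subfield's class group. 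The hypothesis $n \in \mathcal{N}$ enters correctly: $\rk_4(\Cl^+(n)) = 0$ forces the $2$-part of $\Cl^+(n)$ to be elementary abelian (for a finite abelian $2$-group, $2A = 4A$ implies $2A = 0$), hence the same for its quotient $\Cl(n)$ by Lemma~\ref{Gauss2}~i), and likewise for $\Cl(-n)$; so both norm images are killed by $2$ and $4C = 0$ follows. Note that your conclusion is actually \emph{sharper} than the proposition: you show the $2$-part of $\Cl(K_n)$ has exponent dividing $4$, not merely $\rk_8 = 0$, which also re-derives the vanishing of $\rk_{2^k}(\Cl(K_n))$ for all $k \geq 3$ that the paper uses in \S\ref{Theproof}. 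Your closing diagnosis is also accurate and worth keeping: the naive eigenspace decomposition $2C \subseteq C^{c=1} + C^{c=-1}$ fails precisely because ambiguous classes coming from ramified primes $\mathfrak{P}$ satisfy only $[\mathfrak{P}]^2 = j_1([\mathfrak{p}])$ and can have order $4$; routing the argument through the relative norms, whose images land inside $j_1(\Cl(n))$ and $j_3(\Cl(-n))$ on the nose, is what makes the bound $4C = 0$ attainable.
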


\begin{proof}
This is a particular case of  Lemma 3.2 of \cite{Chan--Milovic}.
\end{proof}
\subsection{A formula due to Dirichlet}
\begin{proposition}
\label{hh=h} Let $n\geq 3$ be an odd squarefree integer. Then we have the equality
\begin{equation} 
\label{formdir}
\frac 12 h(n) h(-n) Q(n) = \bigl\vert \Cl (K_n)\bigr\vert,
\end{equation}
where $Q(n)$ takes the values $1$ or $2$. More precisely $Q(n)$ is the Hasse unit index
$$
Q(n) := \left[\mathcal{O}_{K_n}^\ast : \mathcal{O}_{\Q(\sqrt{n})}^\ast \Z[i]^\ast\right].
$$
\end{proposition}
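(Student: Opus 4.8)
The field $K_n = \Q(\sqrt n, i)$ is a totally imaginary $V_4$--extension of $\Q$, and the statement is exactly Kuroda's class number formula for this CM field; the plan is to derive it analytically from the factorization of the Dedekind zeta function. The three quadratic subfields are $\Q(\sqrt n)$ (real), $\Q(\sqrt{-n})$ and $\Q(i)$, with class numbers $h(n)$, $h(-n)$ and $1$. Since $\mathcal{O}_{\Q(\sqrt{-n})}^\ast = \{\pm 1\} \subseteq \Z[i]^\ast$ for $n > 3$, the product of the unit groups of the three subfields is precisely $\mathcal{O}_{\Q(\sqrt n)}^\ast \Z[i]^\ast$, so the Hasse index $Q(n)$ coincides with Kuroda's unit index; this observation is what makes the imaginary subfield $\Q(\sqrt{-n})$ invisible in the final formula. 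Throughout I write $h(K_n) := \vert \Cl(K_n)\vert$.

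First I would factor $\zeta_{K_n}(s) = \zeta(s)\, L(s,\chi_1) L(s,\chi_2) L(s,\chi_3)$ over the characters of $\Gal(K_n/\Q)$, where $\chi_1, \chi_2, \chi_3$ are the quadratic characters cutting out the three subfields. Comparing residues at $s = 1$ via the analytic class number formula, applied once to $K_n$ and once to each quadratic subfield, and invoking the conductor--discriminant relation $\vert d_{K_n}\vert = \vert d_1 d_2 d_3\vert = 4\, d_1\vert d_2\vert$, the transcendental factors of $\pi$ and the square roots $\sqrt{d_1\vert d_2\vert}$ cancel. The roots--of--unity bookkeeping collapses to the single robust identity $w_{K_n} = 2\, w_{\Q(\sqrt{-n})}$ (both sides equal $4$ when $n > 3$; when $n = 3$ the left side becomes $12$ while $\Q(\sqrt{-3})$ contributes $6$, so the identity still holds). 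After this one is left with the clean relation
$$
h(K_n)\, R_{K_n} = h(n)\, h(-n)\, R_{\Q(\sqrt n)}.
$$

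It then remains to compare the two regulators. Since $K_n$ is totally imaginary of degree $4$ its unit rank is $1$, the same as that of $\Q(\sqrt n)$, so the free part of $\mathcal{O}_{K_n}^\ast$ is generated modulo $\Z[i]^\ast$ by a single unit $\eta$, and by the very definition of $Q(n)$ one has $\varepsilon = \xi\, \eta^{Q(n)}$ for some $\xi \in \Z[i]^\ast$, where $\varepsilon > 1$ is the fundamental unit of $\Q(\sqrt n)$. Tracking the weight $2$ attached to each complex place in the regulator, and using $\vert \xi \vert = 1$ at every archimedean place, one obtains $R_{K_n} = 2\, R_{\Q(\sqrt n)}/Q(n)$. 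Substituting this into the displayed identity gives $h(K_n) = \tfrac12 Q(n)\, h(n)\, h(-n)$, which is \eqref{formdir}. Finally, that $Q(n) \in \{1,2\}$ follows from the standard CM argument: for a fundamental unit $\eta$ the quotient $\eta/\bar\eta$ has absolute value $1$ at every archimedean place, hence is a root of unity and lies in $\Z[i]^\ast$, which forces $\eta^2 \in \mathcal{O}_{\Q(\sqrt n)}^\ast \Z[i]^\ast$ and therefore $Q(n) \mid 2$.

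The main obstacle is the regulator comparison of the penultimate step: the powers of $2$ coming from the normalization at the two complex places of $K_n$, from the index $Q(n)$, and from the roots--of--unity count must all be reconciled at once, and it is exactly there that the coefficient $\tfrac12$ is produced, rather than the $\tfrac14$ appearing in the totally real case of Kuroda's formula. Everything else reduces to the zeta factorization and the conductor--discriminant formula and is routine.
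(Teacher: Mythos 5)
Your proof is correct (for $n>3$) and takes a genuinely different route from the paper, which in fact offers no proof at all: it attributes the formula to Dirichlet and cites Hilbert's Zahlbericht \cite[Theorem 115]{Hilb}. (It could equally have invoked Kuroda's class number formula as in \cite{Lemm}, which for an imaginary $V_4$-extension of $\Q$ gives $h(K)=\frac{q}{2}h_1h_2h_3$ with $q=[E_K:E_1E_2E_3]$; your opening observation that $\mathcal{O}_{\Q(\sqrt{-n})}^\ast=\{\pm 1\}\subseteq\Z[i]^\ast$ for $n>3$ is precisely what identifies Kuroda's $q$ with the Hasse index $Q(n)$, and $h(\Q(i))=1$ removes the third class number.) Your analytic bookkeeping checks out: with $\sqrt{\vert d_{K_n}\vert}=2\sqrt{d_1\vert d_2\vert}$, $w_{\Q(\sqrt n)}=2$, $w_{\Q(i)}=4$ and $w_{K_n}=2\,w_{\Q(\sqrt{-n})}$, the residue comparison does collapse to $h(K_n)R_{K_n}=h(n)h(-n)R_{\Q(\sqrt n)}$; the rank-one regulator computation with the weight-$2$ complex place correctly yields $R_{K_n}=2R_{\Q(\sqrt n)}/Q(n)$; and the Kronecker argument ($\eta/\bar{\eta}\in\mu_{K_n}$ and $\eta\bar{\eta}=N_{K_n/\Q(\sqrt n)}(\eta)\in\mathcal{O}_{\Q(\sqrt n)}^\ast$, whence $\eta^2\in\mathcal{O}_{\Q(\sqrt n)}^\ast\Z[i]^\ast$) gives $Q(n)\mid 2$. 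What your route buys is self-containedness and a transparent explanation of the coefficient $\frac12$ (versus $\frac14$ in the totally real case); what the paper's citation buys is brevity for a classical fact.

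One caveat, which is as much the paper's slip as yours: the proposition is stated for $n\geq 3$, but at $n=3$ the identification of $Q(n)$ with the free unit index breaks down, because $\mu_{K_3}=\mu_{12}\neq\Z[i]^\ast$; the torsion then contributes $[\mu_{12}:\mu_4]=3$ to the Hasse index, so that $Q(3)=6$ and the displayed formula would read $\frac12\cdot 1\cdot 1\cdot 6=3$ against $\vert\Cl(\Q(\zeta_{12}))\vert=1$. Your step ``$\varepsilon=\xi\,\eta^{Q(n)}$ with $\xi\in\Z[i]^\ast$'' silently uses $\mu_{K_n}=\Z[i]^\ast$, hence $n>3$, and your parenthetical remark that the identity ``still holds'' at $n=3$ covers only the roots-of-unity count $w_{K_n}=2\,w_{\Q(\sqrt{-n})}$, not this unit-index identification. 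Since the paper itself assumes $n>3$ everywhere the proposition is used (Theorem \ref{central}, Proposition \ref{Hasse}), your argument proves everything that is actually needed; it would be worth stating the restriction $n>3$ explicitly.
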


This formula, in the context of quadratic forms,  was first discovered by Dirichlet, who derived it from his class number formula, see \cite{Dir}. It can for instance be found in the famous Zahlbericht of Hilbert \cite[Theorem 115 p.153]{Hilb}.
 
\subsection{Study of the Hasse unit index}
\begin{proposition} 
\label{Hasse}  
Let $n > 3$ be an odd squarefree integer. Then we have $Q(n) = 2$ if and only if $n \in \mathcal{E}$.
\end{proposition}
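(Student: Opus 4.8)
The plan is to use the fact that $K_n$ is a CM field with maximal totally real subfield $K_n^+ = \Q(\sqrt n)$ and group of roots of unity $\mu(K_n) = \Z[i]^\ast = \langle i\rangle$; the latter holds because $[K_n:\Q]=4$ bounds the degree of any root of unity, and a short case check (using $n>3$ odd squarefree, and that $K_n$ is biquadratic rather than cyclic, so $K_n \ne \Q(\zeta_5),\Q(\zeta_8),\Q(\zeta_{12})$) rules out all roots of unity beyond the fourth. Let $\sigma$ denote complex conjugation, i.e. the nontrivial element of $\Gal(K_n/\Q(\sqrt n))$, with $\sigma(i)=-i$. First I would reduce the statement to a unit criterion by studying the homomorphism $\phi\colon \mathcal{O}_{K_n}^\ast \to \mu(K_n)$, $u \mapsto u/\sigma(u)$. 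Its image lies in $\mu(K_n)$ because $u/\sigma(u)$ is a unit of absolute value $1$ at every archimedean place, its kernel is $\mathcal{O}_{\Q(\sqrt n)}^\ast$, and $\phi(\mu(K_n)) = \mu(K_n)^2 = \{\pm 1\}$ has index $2$. A direct index computation then yields $Q(n) = 2\,|\phi(\mathcal{O}_{K_n}^\ast)|/|\mu(K_n)|$, so that $Q(n)\in\{1,2\}$ and, crucially, $Q(n)=2$ if and only if there is a unit $u\in\mathcal{O}_{K_n}^\ast$ with $u/\sigma(u)$ a generator of $\mu(K_n)$, that is $u/\sigma(u)\in\{i,-i\}$.

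For the direction $n\in\mathcal E \Rightarrow Q(n)=2$, I would take $\beta=c+e\sqrt n\in\Z[\sqrt n]$ with $N_{\Q(\sqrt n)/\Q}(\beta)=c^2-ne^2=\pm 2$ and set $u:=\beta(1+i)/2$. Then $u+\sigma(u)=\beta$ and $u\,\sigma(u)=\beta^2/2$. Since $c^2+ne^2=(c^2-ne^2)+2ne^2$ is even, one gets $\beta^2/2=(ne^2\pm 1)+ce\sqrt n\in\Z[\sqrt n]$, and its norm is $N(\beta)^2/4=1$, so $\beta^2/2\in\mathcal{O}_{\Q(\sqrt n)}^\ast$. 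Hence $u$ is a root of $T^2-\beta T+\beta^2/2$ over $\mathcal{O}_{\Q(\sqrt n)}$ with unit constant term, so $u\in\mathcal{O}_{K_n}^\ast$, while $u/\sigma(u)=(1+i)/(1-i)=i$. By the criterion $Q(n)=2$.

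For the converse, suppose $Q(n)=2$ and choose $u$ with $u/\sigma(u)=i$ (the value $-i$ is symmetric, via $u=\alpha(1-i)$). Writing $u=\alpha+\gamma i$ with $\alpha,\gamma\in\Q(\sqrt n)$, the relation $u=i\,\sigma(u)$ forces $\gamma=\alpha$, so $u=\alpha(1+i)$; then $\beta:=u+\sigma(u)=2\alpha\in\mathcal{O}_{K_n}\cap\Q(\sqrt n)=\mathcal{O}_{\Q(\sqrt n)}$ and $u\,\sigma(u)=\beta^2/2$ is a unit, whence $N(\beta)^2=4$ and $N_{\Q(\sqrt n)/\Q}(\beta)=\pm 2$. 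If $n\equiv 3\bmod 4$, then $\mathcal{O}_{\Q(\sqrt n)}=\Z[\sqrt n]$ gives $\beta=c+e\sqrt n$ with $c^2-ne^2=\pm 2$ directly. The delicate point, which I expect to be the main obstacle, is the case $n\equiv 1\bmod 4$, where $\beta=(a+b\sqrt n)/2$ with $a\equiv b\bmod 2$ is a priori only a half-integer combination. Here I would again invoke that $\beta^2/2=u\,\sigma(u)$ must be an algebraic integer: since $\beta^2/2=(a^2+nb^2)/8+(ab/4)\sqrt n$, integrality of the coefficient $ab/4$ fails when $a,b$ are both odd and forces $a,b$ both even. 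Writing $a=2c$, $b=2e$ then gives $\beta=c+e\sqrt n\in\Z[\sqrt n]$ with $c^2-ne^2=\pm 2$, so $n\in\mathcal E$. As a consistency check one notes that $c^2-ne^2\equiv c^2-e^2\not\equiv\pm 2\bmod 4$ for $n\equiv 1\bmod 4$, so in fact no such unit can exist there and $Q(n)=1$, matching $n\notin\mathcal E$. This establishes the equivalence.
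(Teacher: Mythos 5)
Your proof is correct, but it follows a genuinely different route from the paper's in both halves. For the reduction, the paper does not invoke the classical CM unit-index machinery: it works directly with the groups $A = \mathcal{O}_{\Q(\sqrt{n})}^\ast \Z[i]^\ast$ and $B = \mathcal{O}_{K_n}^\ast$, shows via a non-splitting argument for the exact sequence $1 \to A \to B \to \{\pm 1\} \to 1$ (using $A, B \cong (\Z/4\Z) \oplus \Z$) that any $b \in B \setminus A$ has $b^2 \in A \setminus A^2$, and then eliminates the classes $1, i, \epsilon$ of $A/A^2$ to arrive at the criterion $Q(n) = 2 \iff z^2 = \epsilon i$ for some real unit $\epsilon$ and $z \in K_n$; you instead cite the Hasse-type criterion via $\phi(u) = u/\sigma(u) \in \mu(K_n)$ (resting on Kronecker's theorem and the CM property), getting the equivalent criterion $u/\sigma(u) = \pm i$ --- equivalent because your $u = \alpha(1+i)$ satisfies $u^2 = 2\alpha^2 i = \epsilon i$ with $\epsilon = u\sigma(u)$. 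For the equation-solving, the paper writes $z = x + y\sqrt{n}$ with $x, y \in \Q(i)$, norms down to $\Q(i)$, and must explicitly rule out $N_{\Q(\sqrt{n})/\Q}(\epsilon) = -1$ by a divisibility argument at an odd prime $p \mid n$; your parametrization over $\Q(\sqrt{n})$ bypasses this entirely, since $u\sigma(u) = \beta^2/2$ automatically has norm $+1$, and your explicit unit $u = \beta(1+i)/2$ in the converse direction is a cleaner construction than the paper's coordinate-wise one. The trade-offs: your argument is shorter and leans on standard, generalizable CM-field facts, but must split into the cases $n \equiv 1, 3 \bmod 4$ to handle $\mathcal{O}_{\Q(\sqrt{n})}$, whereas the paper's $\Q(i)$-coordinate computation treats both congruence classes uniformly and is entirely self-contained. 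One small imprecision: in your $n \equiv 1 \bmod 4$ case, membership of $\beta^2/2 = (a^2+nb^2)/8 + (ab/4)\sqrt{n}$ in $\mathcal{O}_{\Q(\sqrt{n})}$ does not require $ab/4 \in \Z$ (half-integer coefficients are allowed there); the correct requirement is $ab/2 \in \Z$, which still fails for $a, b$ both odd, so your conclusion that $a, b$ must both be even stands.
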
 

\begin{proof}
We proceed in two steps. We claim that 
\begin{equation}
\label{iff1}
Q(n) = 2 \iff \textit{there exists  }\epsilon \in \mathcal{O}_{\Q(\sqrt{n})}^\ast\textit{  and } z \in K_n \textit{ such that }
z^2 = \epsilon i.
\end{equation}
Let $A$ and $B$ the multiplicative groups $A:= \mathcal{O}_{\Q(\sqrt{n})}^\ast \Z[i]^\ast $ and $B:= \mathcal{O}_{K_n}^\ast$. By definition $A$ is a subgroup of $B$ and we know that its index $Q(n)$ is equal to $1$ or $2$. The roots of unity in $K_n$ are $\pm 1$ and $\pm i$ thanks to our assumption $n > 3$. Then by the Dirichlet Unit Theorem, $A$ and $B$ are isomorphic to
\begin{equation}
\label{iso}
A, B \cong (\mathbb Z/4 \mathbb Z) \oplus \mathbb Z.
\end{equation} 
Suppose that $Q(n)=2$. Then we get a short exact sequence of abelian groups
$$ 
1 \longrightarrow  A \overset{\imath}{\longrightarrow} B \overset{\pi}{\longrightarrow}\{ \pm 1\}\longrightarrow 1, 
$$ 
where $\imath$ is the natural injection, and $\pi$ the canonical projection on $B/A$. Let $b\in B$ such that $\pi (b) =-1$.
Then $b$ is not in the image of $A$. However $\pi (b^2) =1$, so $b^2$ is in $A$. We claim that $b^2$ is not of the shape
$b^2= a^2$ with $a\in  A$. If it was the case there would exist a morphism $\psi$  from $\{\pm 1\}$ to $B$ such that $\psi(-1) =a/b$. This morphism
$\psi$ would satisfy $\pi \circ\psi= {\rm Id}_{\{\pm 1\}}$. By the splitting lemma for exact sequences, it follows that the sequence splits which is visibly impossible by the isomorphisms \eqref{iso}. So we proved the implication
\begin{equation}
\label{implication}
b\in B\setminus A \Rightarrow b^2 \in A\setminus A^2.
\end{equation}

Let $\epsilon $ be a fundamental unit of $\mathcal O^*_{\mathbb Q (\sqrt n)}$. Then the quotient--group $A/A^2$ exactly contains four classes, which 
are represented by the four elements
$$
1, i, \epsilon, i \epsilon.
$$
Let $b$ as above and consider the class of $b^2$ modulo $A^2$. By \eqref{implication}, we see that $b^2$ does not belong to class of $1$. Hence $b^2$ belongs to one of the three other classes. Suppose  $b^2$ belongs to the class of $i $ modulo $A^2$, then $i$ would be a square of some element of  $K_n$ and this is impossible since $\exp(2\pi i/8)$ does not belong to $K_n$.  

Consider now the case where $b^2$ belongs to the class of $\epsilon$ modulo $A^2$. Then it follows that $\epsilon = z^2$, so $\epsilon = - \alpha^2$ with $\alpha \in \Q(\sqrt{n})^\ast$ by Kummer theory. This is clearly impossible. The last case is when $b^2$ belongs to the class of $i \epsilon$ modulo $A^2$. 
This shows the implication from left to right in the claim \eqref{iff1}. 

The other implication of the claim \eqref{iff1} is clear.

We will now show that 
\begin{equation}
\label{iff2}
\textit{there exists } \epsilon \in \mathcal{O}_{\Q(\sqrt{n})}^\ast \textit{  and } z \in K_n \textit{ with } z^2 = \epsilon i \iff n \in \mathcal{E}.
\end{equation} 
Once we have shown this, Proposition \ref{Hasse}  follows immediately by combining with \eqref{iff1}. We use the convention that all $\pm$ signs appearing are independent from each other. First suppose that we have $\epsilon = a + b\sqrt{n} \in \mathcal{O}_{\Q(\sqrt{n})}^\ast$ and $z = x + y\sqrt{n} \in K_n$ with $z^2 = \epsilon i$ and $x, y \in \Q(i)$. Then we get the equations
\begin{equation}
\label{eBasis}
x^2 + ny^2 = ai, \quad 2xy = bi.
\end{equation}
Furthermore, norming the relation $z^2 = \epsilon i$ to $\Q(i)$ yields
\[
(x^2 - ny^2)^2 = -N_{\Q(\sqrt{n})/\Q}(\epsilon).
\]
First suppose $N_{\Q(\sqrt{n})/\Q}(\epsilon) = -1$, so $x^2 - ny^2 = \pm 1$. Combining this with $x^2 + ny^2 = ai$ we obtain $2ny^2 = ai \pm 1$. Now take $p$ an odd prime dividing $n$. Since $z$ is integral, we see that $2y \in \Z[i]$. Then $p \mid 2ny^2$ and hence $p \mid ai \pm 1$, which is clearly impossible. Hence $N_{\Q(\sqrt{n})/\Q}(\epsilon) = 1$ and we obtain the equality
\[
x^2 - ny^2 = \pm i.
\]
From this and equation (\ref{eBasis}) we deduce that $2x^2 = ai \pm i$ and similarly for $2ny^2$. This implies that $x = \tilde{c} \pm \tilde{c}i$ and $y = \tilde{e} \pm \tilde{e}i$ with $\tilde{c}, \tilde{e} \in \Q$. However, since $2a \in \Z$, we see that $2\tilde{c}, 2\tilde{e} \in \Z$. Plugging back in $x^2 - ny^2 = \pm i$ yields
\[
\pm 2\tilde{c}^2i \pm 2n\tilde{e}^2i = \pm i,
\]
so $2\tilde{c}^2 - 2n\tilde{e}^2 = \pm 1$. From this we see that $\tilde{c}, \tilde{e} \not \in \Z$. Writing $c = 2\tilde{c} \in \Z$ and $e = 2\tilde{e} \in \Z$ we get $c^2 - ne^2 = \pm 2$. This shows the implication from left to right in the claim \eqref{iff2}.  

For the reverse implication, suppose that $c$ and $e$ are such that $c^2 - ne^2 = \pm 2$. Then define
\[
x := \frac{1}{2}(c + ci), \quad y := \frac{1}{2}(e + ei), \quad a := \frac{c^2}{2} + \frac{ne^2}{2}, \quad b := ce.
\]
Note that $a \in \Z$, since $c$ and $e$ are both odd. Then we see that $a^2 - nb^2 = 1$, and hence $\epsilon := a + b\sqrt{n} \in \mathcal{O}_{\Q(\sqrt{n})}^\ast$. If we set $z := x + y\sqrt{n}$, we get $z^2 = \epsilon i$, and the proof of \eqref{iff2} is complete.
\end{proof}
\subsection{Distribution of the function $\omega_3 (n)$} 
It is well known that the function $\Omega (n)$ concentrates around its average value $\log \log n$, see for instance \cite [Theorem 7.20]{MoVa}. The extension of this result to the function $\omega(n)$ is easy. The case of the function $\omega_3 (n)$ is straightforward, since we have a good knowledge of the set of primes congruent to $3 \bmod 4$. We have
 
\begin{proposition} 
\label{concentration}
For every $\varepsilon >0$ there exists $c(\varepsilon ) >0$ such that, uniformly for $x \geq 2$, one has the inequalities
$$
\vert \{ n\leq x : \omega_3 ( n) \leq (1/2-\varepsilon) \log \log x\}\vert \ll x (\log x)^{-c (\varepsilon)}, 
$$
and 
$$
\vert \{ n\leq x : \omega_3 ( n) \geq (1/2+\varepsilon) \log \log x\}\vert \ll x (\log x)^{-c (\varepsilon)}, 
$$
\end{proposition}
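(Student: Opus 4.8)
The plan is to use the exponential-moment (Rankin) method, which reduces both large-deviation estimates to a single uniform upper bound for the mean value of the multiplicative function $n \mapsto z^{\omega_3(n)}$, where $z>0$ is a parameter to be chosen depending on $\varepsilon$.

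First I would record the multiplicative structure. The function $z^{\omega_3(n)}$ is multiplicative with local factor $1 + z(p^{-s} + p^{-2s} + \cdots)$ at primes $p \equiv 3 \bmod 4$ and $(1-p^{-s})^{-1}$ at all other primes; factoring out $\zeta(s)$ gives $\sum_{n} z^{\omega_3(n)} n^{-s} = \zeta(s)\prod_{p\equiv 3 (4)}(1 + (z-1)p^{-s})$, so the arithmetic of the problem is governed entirely by the primes $\equiv 3 \bmod 4$. Using Mertens' theorem in the progression $3 \bmod 4$, namely $\sum_{p\leq y,\, p\equiv 3 (4)} 1/p = \tfrac12 \log\log y + O(1)$, together with a standard upper bound for sums of non-negative multiplicative functions (Halberstam--Richert, or a direct Rankin-type estimate), I would establish that for $z$ in any fixed compact subset of $(0,\infty)$,
$$
\sum_{n\leq x} z^{\omega_3(n)} \ll_z x\,(\log x)^{(z-1)/2}
$$
holds uniformly for $x \geq 2$. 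Here prime powers and the prime $2$ are harmless: they contribute only $O(1)$ to $\omega_3$ and are absorbed into the local factors, so the restriction to squarefree $n$ elsewhere in the paper is immaterial for this statement.

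With this estimate the tail bounds follow from Markov's inequality applied to the weight $z^{\omega_3(n)}$. For the upper tail, set $T = (1/2+\varepsilon)\log\log x$ and take $z>1$; then
$$
|\{n\leq x : \omega_3(n) \geq T\}| \leq z^{-T}\sum_{n\leq x} z^{\omega_3(n)} \ll x\exp\Big(\big(\tfrac{z-1}{2} - (\tfrac12+\varepsilon)\log z\big)\log\log x\Big).
$$
The coefficient $g(z) = \tfrac{z-1}{2} - (\tfrac12+\varepsilon)\log z$ vanishes at $z=1$ with $g'(1) = -\varepsilon < 0$, so any fixed $z=z(\varepsilon)>1$ sufficiently close to $1$ makes $g(z) = -c(\varepsilon)<0$, giving the desired bound $x(\log x)^{-c(\varepsilon)}$. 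The lower tail is symmetric: with $T = (1/2-\varepsilon)\log\log x$ and $0<z<1$ one has $z^{\omega_3(n)} \geq z^{T}$ whenever $\omega_3(n)\leq T$, and the analogue $h(z)=\tfrac{z-1}{2}-(\tfrac12-\varepsilon)\log z$ satisfies $h(1)=0$ and $h'(1)=\varepsilon>0$, so a fixed $z=z(\varepsilon)<1$ close to $1$ yields $h(z)=-c(\varepsilon)<0$. For the finitely many small $x$ (where $\log\log x$ may be non-positive and the thresholds bounded) both inequalities are trivial after enlarging the implied constant.

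The main obstacle is the uniform mean-value bound for $\sum_{n\leq x} z^{\omega_3(n)}$: one must control this multiplicative function uniformly for $z$ in a neighbourhood of $1$ and for all $x\geq 2$, rather than merely obtaining an asymptotic as $x\to\infty$. This is precisely where the good distribution of the primes $\equiv 3\bmod 4$ enters, and it is the only non-formal input; everything after it is the routine Chernoff optimization in $z$.
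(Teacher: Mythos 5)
Your proposal is correct, and it is exactly the argument the paper has in mind: the paper gives no proof beyond citing the classical concentration result for $\Omega(n)$ (\cite[Theorem 7.20]{MoVa}) and remarking that the adaptation to $\omega_3(n)$ is straightforward given the distribution of primes $\equiv 3 \bmod 4$, and the standard proof of that cited theorem is precisely your Rankin/Chernoff exponential-moment method, with Mertens' theorem in the progression $3 \bmod 4$ supplying the $\tfrac12\log\log x$ that you correctly identify as the only non-formal input.
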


We could even be more precise in the description of $\omega_3 (n) $ around its average value by adapting the Erd\" os--Kac Theorem, see for instance \cite[Theorem 7.21]{MoVa}.

\section{The proof of Theorem \ref{central}}
\label{Theproof}
The proof is a straightforward application of the propositions contained in \S \ref{sec2}. We start from \eqref{formdir}. Taking the $2$--parts of both sides of this equality, and taking the logarithm in basis $2$, we deduce the following equality
\begin{equation}
\label{starting}
\log_2 Q(n)-1 + \sum_{k=1} ^\infty \Bigl( \rk_{2^k}\bigl( \Cl(  n) \bigr) + \rk_{2^k}\bigl( \Cl(   {-n})\bigr)\Bigr)
= \sum_{k=1} ^\infty   \rk_{2^k}\bigl( \Cl( K_n)\bigr),
\end{equation}
which is true for any $n > 3$ odd and squarefree. If $n$ is such that $\rk_4 \bigl( \Cl( \mathbb Q (\sqrt n))\bigr) =\rk_{4}\bigl( \Cl( \mathbb Q (\sqrt {-n}))\bigr)\ =0$, then trivially, we have 
$$
\rk_{2^k} \bigl( \Cl( \mathbb Q (\sqrt n))\bigr) =\rk_{2^k}\bigl( \Cl( \mathbb Q (\sqrt {-n}))\bigr) =0 \,  ( k\geq 3 )
$$
and also thanks to Proposition \ref{8rank}, the equalities $\rk_{2^k}(\Cl (K_n) ) =0$ ($k\geq 3$). These remarks simplify \eqref{starting} into
\begin{equation}
\label{final}
\rk_4  (\Cl (K_n)) = \Bigl( \rk_{2 }\bigl( \Cl(  n)\bigr) + \rk_{2 }\bigl( \Cl(  -n)\bigr) \Bigr) -\rk_2 (\Cl (K_n)) + \log_2 Q(n) -1.
\end{equation}
Propositions \ref{Gauss4} and  \ref{rk2Kn=}   give the values of each of the two first  terms on the right--hand side of the equality
\eqref{final}  in terms of the number of prime divisors of $n$ satisfying some congruence conditions. Proposition \ref{Hasse} gives the value of $Q(n)$. Then a simple case distinction completes the proof of Theorem \ref{central}.
  


\begin{thebibliography}{14}
\bibitem{Chan--Milovic}
S.~Chan and D.Z.~Milovic, Kuroda's formula and arithmetic statistics. \emph{arXiv preprint:} 1905.09745, 2019.	
	
\bibitem{CL}
H.~Cohen and H.W.~Lenstra, Heuristics on class groups of number fields. \emph{Number theory, Noordwijkerhout 1983 (Noordwijkerhout, 1983),} 33--62, Lecture Notes in Math., 1068, \emph{Springer, Berlin,} 1984.

\bibitem{CM}
H.~Cohen and J.~Martinet, \'Etude heuristique des groupes de classes des corps de nombres. \emph{J. Reine Angew. Math.} 404 (1990), 39--76.

\bibitem{Dir} G.L.~Dirichlet, Recherches sur les formes quadratiques \`a co\" efficients et \`a ind\' etermin\' ees complexes. 
\emph{J. Reine Angew. Math.} 24 (1842), 291--371.

\bibitem{Fo--Kl1}
\' E.~Fouvry and J.~Kl\"uners, Cohen-Lenstra heuristics of quadratic number fields. \emph{Algorithmic number theory, } Lecture Notes in Comput. Sci., 4076, Springer, Berlin, (2006), 40--55.

\bibitem{Fo--Kl2}
\' E.~Fouvry and J.~Kl\"uners, On the 4-rank of class groups of quadratic number fields. \emph{Invent. Math.} 167 (2007), no. 3, 455--513.
 
\bibitem{Fo--Kl3}
\' E.~Fouvry and J.~Kl\"uners, On the Spiegelungssatz for the 4-rank. \emph{Algebra Number Theory} 4 (2010), no. 5, 493--508.

\bibitem{Fo--Kl4}
\' E.~Fouvry and J.~Kl\"uners,  On the negative Pell equation.\emph{ Ann. of Math. (2)} 172 (2010), no. 3, 2035--2104.

\bibitem{Gerth}
F.~Gerth, The 4-class ranks of quadratic fields. \emph{Invent. Math.} 77 (1984), no. 3, 489--515.

\bibitem{Hilb}  
D.~Hilbert, The theory of algebraic number fields. Translated from the German and with a preface by I. T. Adamson. With an introduction by F. Lemmermeyer and N. Schappacher. \emph{Springer-Verlag,} Berlin, 1998.

\bibitem{Ko--Pa}
P.~Koymans and C.~Pagano, On the distribution of $\Cl(K) [\ell^\infty]$ for degree $\ell$ cyclic fields.  \emph{arXiv preprint:} 1812.06884, 2018.

\bibitem{Lemm}
F.~Lemmermeyer, 
Kuroda's class number formula. 
\emph{Acta Arith.}  66 (1994), no. 3, 245--260.

\bibitem{MoVa}
H.L.~Montgomery and R.C.~Vaughan, Multiplicative number theory. I. Classical theory.  \emph{ Cambridge Studies in Advanced Mathematics,} 97, Cambridge University Press, Cambridge, 2007. 
 
\bibitem{Smith}
A.~Smith, $2^\infty$-Selmer groups, $2^\infty$-class groups, and Goldfeld's conjecture. \emph{arXiv preprint:} 1702.02325, 2017.	

\end{thebibliography}
\end{document}